\newtheorem{theorem}{Theorem}[section]
\newtheorem{lemma}{Lemma}[section]
\newtheorem{corollary}{Corollary}[section]
\newtheorem*{corollary*}{Corollary}
\newtheorem{remark}{Remark}[section]
\begin{document}

\thispagestyle{plain} 

\sloppy

\begin{center}
{\bf \Large\bf Eigenvalues asymptotics of unbounded operators. Two-photon quantum Rabi model.}
\\ \bigskip
{\large Ianovich E.A.}
\\ \bigskip
   {\it Saint Petersburg, Russia}
\\ \bigskip
   {\it\small E-mail: eduard@yanovich.spb.ru}
\end{center}

\begin{abstract}
In this work the general results about asymptotics of eigenvalues of unbounded operators are obtained. We consider here different cases of compact, relatively compact, selfadjoint or nonselfadjoint perturbations. In particular we prove a generalization of Janas-Naboko lemma about eigenvalues asymptotics of unbounded operators at compact perturbation. A generalization of our previous result about noncompact perturbation of oscillator spectrum is also given. As an example we consider two-photon quantum Rabi model. We obtain tree-term asymptotic formula for large eigenvalues of the energy operator of this model. The asymptotics of related to this model polynomials is found. We give also an original proof of the Perelomov factorization theorem for contraction operator of quantum optics.
\end{abstract}

\section{Introduction}

In works~\cite{11,12} we proposed a method which gives the posiibility to find one or two first terms of asymptotics of large eigenvalues for a wide class of operators. But it does not allow to find a third term for such interesting physical operators as Jaynes-Cummings model without rotating wave approximation (one-photon quantum Rabi model) for example and many others. In this work we generalize our previous results. Section 2 is dedicated to the different properties of compact operators which are used next. In the section~\ref{main_theorems_about_asymptotics} we prove the general theorems~(\ref{eigenvalues_at_relatively_compact_R}) and~(\ref{noncompact_perturbation_oscillator}) about relatively compact perturbation of arbitrary spectrum and non-compact perturbation of oscillator spectrum. In the section~\ref{Rabi} we consider the two-photon quantum Rabi model as an example of application of the theorem~(\ref{noncompact_perturbation_oscillator}). In particular we prove here an important Perelomov factorization theorem~(\ref{factorization_U}). Section 5 is dedicated to the finding of asymptotics of the polynomials (theorems~(\ref{asymptot_Polynom_even}) and~(\ref{asymptot_Polynom_odd})) related to this  Rabi model. In the section 6 we prove three-term asymptotic formula for eigenvalues of the Hamiltonian of two-photon quantum Rabi model (theorem~(\ref{final_theorem_asyptot})).

\section{Several remarkable properties of compact operators}

Let \(B(H)\) be the ring of bounded operators acting in the Hilbert space \(H\). It is well known~\cite{6,7} that the set of compact (completely continuous) operators \(G_\infty\subset B(H)\) is a two-sided symmetric ideal of \(B(H)\):
\begin{equation}
\label{Ideal_property}
\left(A\in G_\infty\,,\: C\in B(H)\:\right) \Rightarrow \left(AC\in G_\infty\,,\: CA\in G_\infty\right)\,,
\end{equation}
\begin{equation}
\label{Symmetric_property}
A\in G_\infty \Rightarrow A^*\in G_\infty\,.
\end{equation}

Any compact operator \(A\) admits the Schmidt expansion~\cite{6}
\begin{equation}
\label{Schmidt_expansion}
A=\sum_{n=1}^\infty s_n\,(\,\cdot\,,\varphi_n)\,\psi_n\,,
\end{equation}
where \(s_n\) are {\it s}-numbers of \(A\) and \(\{\varphi_n\}\), \(\{\psi_n\}\) are certain orthonormal systems in \(H\). 

s-numbers \(s_n\) are the eigenvalues of selfadjoint operator \(\sqrt{A^*A}\) and have the following properties:
$$
s_1=\|A\|=\|A^*\|\,,\quad\sqrt{A^*A}\,\varphi_n=s_n\,\varphi_n\,,\quad  s_n\ge 0\,,\quad s_{n+1}\le s_n\,,\quad n\in\mathbb{N}\,,\quad \lim\limits_{n\to\infty}s_n=0\,.
$$
Besides \(\psi_n=U\varphi_n\), where \(U\) is an isometric operator and \(A=U\sqrt{A^*A}\).

The compact operator transforms any weakly convergent sequence into strongly convergent one~\cite{9}:
\begin{equation}
\label{sequence_convergence_property}
\left(A\in G_\infty\,,\:\forall f\in H\:\:(x_n,f)\to(x,f)\:\right)\Rightarrow \|Ax_n-Ax\|\to 0\,.
\end{equation}

\begin{lemma}
\label{lemma_compact}
Let \(\{e_n\}\) be an infinite orthonormal system of vectors and \(A\in G_\infty\). Then \(\|A\,e_n\|\to 0\) as \(n\to\infty\).
\end{lemma}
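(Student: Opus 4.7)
The plan is to reduce the statement to the property~(\ref{sequence_convergence_property}) that a compact operator sends weakly convergent sequences to strongly convergent ones. Concretely, I intend to show first that the orthonormal sequence $\{e_n\}$ converges weakly to zero, and then invoke~(\ref{sequence_convergence_property}) with $x_n = e_n$, $x = 0$ to conclude $\|A e_n\| \to 0$.

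To establish weak convergence $e_n \rightharpoonup 0$, I would fix an arbitrary $f \in H$ and apply Bessel's inequality to the orthonormal system $\{e_n\}$:
\begin{equation*}
\sum_{n=1}^\infty |(f, e_n)|^2 \le \|f\|^2 < \infty.
\end{equation*}
The convergence of this series forces its general term $|(f, e_n)|^2$ to tend to zero, hence $(e_n, f) = \overline{(f, e_n)} \to 0$ for every $f \in H$. This is precisely the hypothesis appearing in~(\ref{sequence_convergence_property}) with the weak limit $x = 0$.

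Applying now~(\ref{sequence_convergence_property}) to the compact operator $A$ and the weakly null sequence $\{e_n\}$ yields $\|A e_n - A \cdot 0\| = \|A e_n\| \to 0$, which is the claim. I do not foresee a real obstacle here: the only point that requires any care is making sure to cite the exact form of~(\ref{sequence_convergence_property}) already recorded above, rather than re-proving the weak-to-strong continuity of compact operators. The whole argument is just Bessel's inequality followed by a direct application of an already stated property.
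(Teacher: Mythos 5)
Your proposal is correct and coincides with the paper's second proof of this lemma: there, too, one observes that $\{e_n\}$ converges weakly to zero and then applies property~(\ref{sequence_convergence_property}) to conclude $\|Ae_n\|\to 0$. The only difference is that you spell out the Bessel-inequality justification of the weak convergence, which the paper leaves implicit.
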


\begin{proof}[1st Proof]
From (\ref{Schmidt_expansion}) it follows that \(A\,e_n=\sum\limits_{m=1}^\infty s_m\,(e_n,\varphi_m)\,\psi_m\) and
$$
t_n\equiv\|A\,e_n\|^2=\sum\limits_{m=1}^\infty s_m^2\,|(e_n,\varphi_m)|^2\,.
$$
The matrix \(C_{n,m}=|(e_n,\varphi_m)|^2\) satisfies the all conditions of regular transformation~\cite{8}:
$$
\sum\limits_{m=1}^\infty |C_{n,m}|=\sum\limits_{m=1}^\infty|(e_n,\varphi_m)|^2\le \|e_n\|^2=1\,,
$$
$$
\lim\limits_{n\to\infty}C_{n,m}=\lim\limits_{n\to\infty}|(e_n,\varphi_m)|^2=0\,,\quad m\in\mathbb{N}\,.
$$
Hence from \(s_m^2\to 0\) it follows that \(t_n\to 0\).
\end{proof}

\begin{proof}[2nd Proof] The sequence \(e_n\) weakly converges to zero: \(\forall f\in H\:\:(e_n,f)\to0\). According to~(\ref{sequence_convergence_property}), the compact operator \(A\) transforms \(e_n\) into strongly convergent sequence \(Ae_n\). Therefore \(\|Ae_n\|\to0\).

\end{proof}

The operator \(A\) can be represented by infinite matrix \(\{a_{ij}\}\) in some orthonormal bases \(\{e_n\}\): \(a_{ij}=(Ae_j,e_i)\). Infinite matrix is called to be band if  \(a_{ij}=0\) as \(|i-j|>r\ge 0\), where \(r\in\mathbb{Z}_+\) is fixed.  For infinite band matrices there exists a simple compactness criterion~\cite{10}:

\begin{theorem}
\label{compactness_for_band_matrix}
The operator \(A\) represented by band matrix \(\{a_{ij}\}\) is compact iff 
$$
\lim\limits_{n\to\infty}a_{n,n+k}=0\,,\quad k=-r,-(r-1),\ldots,(r-1),r\,.
$$
\end{theorem}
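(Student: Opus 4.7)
The plan is to prove both directions separately, using Lemma~\ref{lemma_compact} for necessity and an explicit decomposition into compact pieces for sufficiency.

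For the forward direction, I would apply Lemma~\ref{lemma_compact} to the orthonormal basis $\{e_n\}$ in which the band representation of $A$ is given. Compactness yields $\|A e_n\|\to 0$, and expanding $A e_n = \sum_i a_{in} e_i$ together with the band condition $a_{in}=0$ for $|i-n|>r$ gives
$$
\|A e_n\|^2 = \sum_{k=-r}^{r}|a_{n+k,n}|^2,
$$
so each of the finitely many summands must vanish: $a_{n+k,n}\to 0$ for every $k\in\{-r,\ldots,r\}$. Substituting $m=n+k$ and $k'=-k$ translates this into the required statement $a_{m,m+k'}\to 0$ for $k'\in\{-r,\ldots,r\}$.

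For the converse, I would split $A$ into its $2r+1$ diagonals. Let $T_k$ be defined by $(T_k)_{ij}=a_{ij}$ when $j-i=k$ and zero otherwise, so that $A=\sum_{k=-r}^{r}T_k$. Each $T_k$ admits a factorization $T_k = S_{-k} D_k$, where $S_{-k}$ is the bounded shift $e_n\mapsto e_{n-k}$ (understood as zero when the target index is nonpositive) and $D_k$ is the diagonal operator with $(D_k)_{jj}=a_{j-k,j}$. By hypothesis the diagonal entries of $D_k$ tend to zero, so $D_k$ is the uniform norm-limit of its finite-rank truncations and therefore lies in $G_\infty$. The ideal property~(\ref{Ideal_property}) then forces $T_k=S_{-k}D_k\in G_\infty$, and the finite sum $A=\sum_k T_k$ is compact.

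The only point requiring genuine care is the matrix-entry bookkeeping in the factorization $T_k = S_{-k} D_k$: one must check that $(S_{-k}D_k)_{ij}=\delta_{i,j-k}a_{j-k,j}$ indeed reproduces the $k$-th diagonal of $A$. The a priori boundedness of $A$ itself is immediate, since the $2r+1$ diagonal sequences are bounded (they converge). Beyond this bookkeeping, the argument uses only Lemma~\ref{lemma_compact} and the ideal property~(\ref{Ideal_property}), so I do not anticipate any substantial obstacle.
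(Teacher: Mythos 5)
Your proof is correct, but there is nothing in the paper to compare it against: the paper states this criterion as a known result with a citation to Akhiezer--Glazman and gives no proof of its own. Taken on its merits, your argument is sound and self-contained. Necessity follows exactly as you describe from Lemma~\ref{lemma_compact}: the band condition collapses $\|Ae_n\|^2=\sum_{k=-r}^{r}|a_{n+k,n}|^2$ to a sum of $2r+1$ terms, each of which must vanish, and the reindexing $m=n+k$, $k'=-k$ recovers the stated form. For sufficiency, the bookkeeping in $T_k=S_{-k}D_k$ checks out: $T_ke_j=a_{j-k,j}\,e_{j-k}=S_{-k}\bigl(a_{j-k,j}e_j\bigr)$, the diagonal entries $a_{j-k,j}$ tend to zero by the hypothesis (substitute $n=j-k$), a diagonal operator with entries tending to zero is the uniform limit of its finite-rank truncations and hence compact, and the ideal property~(\ref{Ideal_property}) together with the finiteness of the sum over $k$ finishes the argument. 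Your route is also consistent in spirit with the paper's proof of Theorem~\ref{compactness_for_nonband_matrix}, which likewise obtains compactness as a norm limit of simpler compact pieces; the only point worth flagging is that your necessity direction invokes Lemma~\ref{lemma_compact} specifically for the basis in which $A$ is band, which is exactly the generality in which that lemma is stated, so no circularity or gap arises.
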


For not band matrices there is no such simple criterion. There are many sufficient criteria only.
For example, there exists the following theorem~\cite{6,9,10}:

\begin{theorem}
\label{compactness_Hilbert-Schmidt}
If the matrix \(\{a_{ij}\}\) of the operator \(A\) satisfies the condition
$$
\sum\limits_{i,j=1}^\infty|a_{ij}|^2<\infty\,,
$$
then the operator \(A\) is compact.
\end{theorem}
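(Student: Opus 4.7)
The plan is to realize $A$ as a uniform limit of finite-rank operators. First I would define the truncations $A_N$ by the matrix $a^{(N)}_{ij}=a_{ij}$ for $i,j\le N$ and $a^{(N)}_{ij}=0$ otherwise. Each $A_N$ sends $H$ into $\mathrm{span}\{e_1,\ldots,e_N\}$, so it has finite rank and is therefore compact.

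The central step is a norm estimate. For an arbitrary $x=\sum_j x_j e_j$ with $\|x\|=1$ I would expand $(A-A_N)x$ in the basis $\{e_i\}$ and apply the Cauchy--Schwarz inequality in the $j$-variable to each coefficient, followed by Parseval's identity, so as to obtain
$$
\|(A-A_N)x\|^{2}\;\le\;\sum_{(i,j)\notin\{1,\ldots,N\}^{2}}|a_{ij}|^{2}.
$$
Since the total double series $\sum_{i,j}|a_{ij}|^{2}$ converges by hypothesis, its tail on the right tends to $0$ as $N\to\infty$, uniformly in $x$ with $\|x\|=1$. Hence $\|A-A_N\|\to 0$ in the operator norm. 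The same estimate with $N=0$ also confirms a posteriori that $A$ is bounded, so there is no issue of interpreting the norm on the left.

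Finally, I would invoke the standard fact that the ideal $G_\infty$ is closed in $B(H)$ with respect to the operator norm: the uniform limit of compact operators is compact. Applied to the sequence $A_N\to A$, this yields $A\in G_\infty$, completing the proof.

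I do not anticipate any serious obstacle; the only delicate point is the double application of Cauchy--Schwarz/Parseval which must be arranged so that the resulting bound is exactly the tail of the Hilbert--Schmidt sum rather than some larger quantity.
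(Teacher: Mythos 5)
Your proof is correct. Note that the paper does not prove this statement at all: it is quoted as a known result with references to Gohberg--Krein, Smirnov, and Akhiezer--Glazman, so there is no in-paper argument to compare against. Your truncation argument --- finite-rank cut-offs $A_N$, the Cauchy--Schwarz/Parseval estimate $\|A-A_N\|^2\le\sum_{(i,j)\notin\{1,\dots,N\}^2}|a_{ij}|^2$, and closedness of the compact operators in the operator norm --- is exactly the standard proof found in those references, and the same "approximate by band/finite matrices and pass to the norm limit" scheme the paper itself uses later in Theorem 2.4.
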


However this condition is sufficient and necessary for \(A\in G_2\subset G_\infty\), where \(G_2\) is a Hilbert-Schmidt class~\cite{6,10}.

Also it is known the following result~\cite{6,9,10}:
\begin{theorem}
\label{compactness_Nuclear}
If the matrix \(\{a_{ij}\}\) of the operator \(A\) satisfies the conditions
$$
\forall j\in\mathbb{N}\:\:\:\sum_{i=1}^\infty|a_{ij}|\le L\,,\quad \forall i\in\mathbb{N}\:\:\:\sum_{j=1}^\infty|a_{ij}|\le L\,,\quad
\sum_{i=1}^\infty|a_{ii}|<\infty\,,
$$
then the operator \(A\) is compact and nuclear, that is \(A\in G_1\subset G_2\subset G_\infty\).
\end{theorem}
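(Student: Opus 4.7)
The plan is to establish nuclearity ($A\in G_1$) directly, since this immediately yields the weaker conclusions $A\in G_2$ and $A\in G_\infty$. First, I would verify that $A$ extends to a bounded operator on $\ell^2$ by Schur's test: Cauchy--Schwarz on the splitting $|a_{ij}|=|a_{ij}|^{1/2}\cdot|a_{ij}|^{1/2}$ combined with the two $\ell^1$-bounds gives $\sum_i|(Ax)_i|^2\le L\sum_j|x_j|^2\sum_i|a_{ij}|\le L^2\|x\|^2$, so $\|A\|\le L$.

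Next, I would write $A=D+R$ with $D$ the diagonal operator $D_{ii}=a_{ii}$. The third hypothesis makes $D$ trace class with $\|D\|_1=\sum_i|a_{ii}|<\infty$, so matters reduce to showing $R\in G_1$, where $R$ still satisfies Schur-type bounds (with constant at most $2L$) and has zero diagonal.

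The final and decisive step is to exhibit a Hilbert--Schmidt factorization $R=BC$ with $B,C\in G_2$, after which the ideal identity $G_2\cdot G_2=G_1$ gives $R\in G_1$. A natural candidate interposes a diagonal weight tuned to the trace condition, e.g.\ $C_{jj}=\sqrt{|a_{jj}|}$ (with a finite-rank correction wherever $a_{jj}=0$) and $B_{ij}=a_{ij}/C_{jj}$; the Schur-type bounds on rows and columns, combined with the diagonal summability, must then make both Hilbert--Schmidt norms finite, so that the product lies in $G_1$.

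The main obstacle is precisely this last step: the row and column $\ell^1$-bounds alone do not suffice for Hilbert--Schmidt membership (the unilateral shift has row and column sums equal to one and zero diagonal, yet is not even compact), so the factorization must genuinely exploit the third hypothesis to control the off-diagonal mass left over by the Schur bounds. Securing this quantitative interplay between the $\ell^1$-row/column bounds and the diagonal trace bound is the heart of the proof, and is the point at which one must appeal to the treatments in \cite{6,9,10}.
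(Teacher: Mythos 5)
The decisive step of your proposal is not carried out, and in fact it cannot be: the unilateral shift, which you cite as an obstacle to the Hilbert--Schmidt factorization, is actually a full counterexample to the theorem as stated. The shift $S$ has matrix $a_{ij}=\delta_{i,j+1}$, so every column sum and every row sum of $|a_{ij}|$ is at most $1$ and the diagonal vanishes identically, whence $\sum_i|a_{ii}|=0<\infty$. All three hypotheses therefore hold with $L=1$, yet $S$ is an isometry and is not compact (indeed $\|Se_n\|=1\not\to0$, contradicting Lemma~\ref{lemma_compact}). In your decomposition $A=D+R$ this example gives $D=0$ and $R=S$, so no choice of weights can produce $R=BC$ with $B,C\in G_2$; the third hypothesis constrains only the diagonal and contributes nothing toward controlling the off-diagonal part, which is where all of the non-compactness can reside. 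You should have promoted your ``obstacle'' to a refutation: as written, the statement admits no proof, and a correct version needs a genuinely stronger summability hypothesis, for instance $\sum_{i,j}|a_{ij}|<\infty$ (which gives $\|A\|_1\le\sum_{i,j}|a_{ij}|$ by expanding $A$ as an absolutely convergent series of rank-one operators $a_{ij}\,(\,\cdot\,,e_j)\,e_i$), or $\sum_j\|Ae_j\|<\infty$.

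Note also that the paper supplies no proof of this theorem; it is quoted from the references with citations only, so there is nothing to compare your argument against except the statement itself. Measured against that, your Schur-test argument for boundedness and the splitting off of the trace-class diagonal are correct but are the easy part; the remaining step is not a technical detail to be ``appealed to'' in the literature but the precise point at which the claimed result fails.
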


We recall also~\cite{6} that the compact operator \(A\) belongs to the Schatten-Neuman class \(G_p\) (\(1\le p\le\infty\)) iff
$$
\|A\|_p\equiv\left[\sum\limits_{n=1}^\infty s^p_n(A)\right]^{1/p}<\infty\,.
$$

We will need further the following sufficient compactness condition for infinite not band matrices proved in~\cite{11}:

\begin{theorem}
\label{compactness_for_nonband_matrix}
Let V be a bounded not compact operator in Hilbert space \(H\) represented in some orthonormal basis by infinite matrix \(\{V_{ij}\}_{i,j=0}^\infty\) satisfying the condition
\begin{equation}
\label{condition_1}
\lim\limits_{n\to\infty}V_{n,n+p}=0\,,\quad p\in\mathbb{Z}\,.
\end{equation}
Let \(\{b_i\}_{i=-\infty}^\infty\) be an arbitrary \(l_2\)-sequence:
\begin{equation}
\label{condition_2}
\|b\|^2=\sum\limits_{i=-\infty}^\infty|b_i|^2<\infty\,.
\end{equation}
Then the operator \(K\) with matrix \(K_{ij}=b_{i-j}V_{ij}\) is compact in \(H\).
\end{theorem}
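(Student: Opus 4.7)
The plan is to realize $K$ as a norm limit of compact band operators and invoke the closedness of $G_\infty$ in the operator-norm topology. The approximants will be compact by Theorem~(\ref{compactness_for_band_matrix}), and the truncation error will be controlled by the $\ell_2$-norm of the discarded tail of $\{b_i\}$. Crucially, the estimate for this error should use only the boundedness of~$V$, not any Hilbert--Schmidt or band hypothesis on~$V$ itself.

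For each $N\in\mathbb{N}$ I would truncate the sequence by $b^{(N)}_i=b_i$ for $|i|\le N$ and $b^{(N)}_i=0$ otherwise, and let $K^{(N)}$ be the operator with matrix $(K^{(N)})_{ij}=b^{(N)}_{i-j}V_{ij}$. This is a band matrix of half-width~$N$, and along each of its nonzero diagonals $(K^{(N)})_{n,n+k}=b^{(N)}_{-k}V_{n,n+k}\to 0$ as $n\to\infty$ by hypothesis~(\ref{condition_1}). Hence Theorem~(\ref{compactness_for_band_matrix}) gives $K^{(N)}\in G_\infty$ for every $N$.

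The heart of the argument is the norm bound $\|K-K^{(N)}\|\le\|V\|\,\|b-b^{(N)}\|_2$. Setting $c=b-b^{(N)}$ and $R=K-K^{(N)}$, so that $R_{ij}=c_{i-j}V_{ij}$, I would write the $i$-th coordinate of $Rx$ by reindexing $k=i-j$:
$$
(Rx)_i=\sum_k c_k\,V_{i,i-k}\,x_{i-k},
$$
and apply Cauchy--Schwarz in~$k$ to get $|(Rx)_i|^2\le\|c\|_2^2\sum_k|V_{i,i-k}|^2|x_{i-k}|^2$. Summing over~$i$ and reverting the substitution with $j=i-k$ yields
$$
\|Rx\|^2\le\|c\|_2^2\sum_j|x_j|^2\sum_i|V_{ij}|^2=\|c\|_2^2\sum_j|x_j|^2\|Ve_j\|^2\le\|c\|_2^2\|V\|^2\|x\|^2,
$$
which is the desired estimate. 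Since $\|b-b^{(N)}\|_2\to 0$ by~(\ref{condition_2}), the $K^{(N)}$ converge to $K$ in operator norm and $K$ is compact.

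The main obstacle I expect is precisely this norm estimate. A naive diagonal-by-diagonal splitting $K=\sum_p b_p D_p$, with $D_p$ the operator isolating the $p$-th diagonal of $V$, gives only $\|D_p\|\le\|V\|$ and hence a triangle-inequality bound that requires $b\in\ell_1$, which is strictly stronger than what is assumed. The correct move is the row-by-row Cauchy--Schwarz above, which redistributes the squared $\ell_2$-mass of $b$ against the column norms $\|Ve_j\|^2\le\|V\|^2$, so that only the column $\ell_2$-structure of a bounded operator is used, not any summability of~$V$ itself.
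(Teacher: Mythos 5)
Your proposal is correct and follows essentially the same route as the paper: truncate $b$ to a finitely supported sequence, observe that the resulting band operators $K^{(N)}$ are compact by Theorem~\ref{compactness_for_band_matrix} together with condition~(\ref{condition_1}), and control $\|K-K^{(N)}\|\le\|V\|\,\|b-b^{(N)}\|_2$ so that $K$ is a norm limit of compact operators. The only difference is technical rather than conceptual: the paper obtains that norm bound from a Schur (row/column $\ell_1$-sum) test combined with Cauchy--Schwarz, while you compute $\|Rx\|^2$ directly with one Cauchy--Schwarz in the diagonal index and resum against the column norms $\|Ve_j\|^2\le\|V\|^2$ --- both arguments exploit exactly the same $\ell_2$ pairing of $b$ against $V$.
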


\begin{proof}
Let us show first that the operator $K$ is bounded. It suffices to prove for that the estimates~\cite{9}
\begin{equation}
\label{bounded}
\forall i\in\mathbb{Z}_+\:\:\sum_{j=0}^\infty|K_{i,j}|<L\,,\quad
\forall j\in\mathbb{Z}_+\:\:\sum_{i=0}^\infty|K_{i,j}|<L\,,
\end{equation}
where $L$ is a constant independent of $i$ and $j$ ($\|K\|\le L$). Using Cauchy's inequality and ~(\ref{condition_2}), we have
$$
\sum_{j=0}^\infty|K_{i,j}|=\sum_{j=0}^\infty|b_{i-j}V_{i,j}|\le
\left(\sum_{j=0}^\infty |b_{i-j}|^2\right)^{1/2}
\left(\sum_{j=0}^\infty |V_{i,j}|^2\right)^{1/2}\le
$$
$$
\le\|b\|\sqrt{(VV^*)_{i,i}}\le \|b\|\cdot\|V\|\,.
$$
So the first estimate in~(\ref{bounded}) is fulfilled. By the same way the validity of second estimate in~(\ref{bounded}) is established. Thus the operator $K$ is bounded. We shall prove now its compactness.

Let's define the cut-off function $b^{(n)}=\{b^{(n)}_i\}_{i=-\infty}^\infty$ of the sequence $\{b_i\}$
$$
b_i^{(n)}=\left\{
                  \begin{array}{lc}
                  0\,,&|i|>n\\
                  b_i\,,&|i|\le n\\
                  \end{array}
          \right.
$$
and the sequence of operators $K^{(n)}$ by the formula $K^{(n)}_{i,j}=b^{(n)}_{i-j}V_{i,j}$. The matrix of the operator $K^{(n)}$ is band. It follows from Theorem~(\ref{compactness_for_band_matrix})  and condition~(\ref{condition_1}) that $K^{(n)}$ is a compact operator. We proceed further as in the proof of~(\ref{bounded}):
$$
\|K-K^{(n)}\|\le \|b-b^{(n)}\|\cdot\|V\|\,.
$$
Terefore
$$
\|K-K^{(n)}\|\to 0\,,\quad n\to\infty
$$
and $K$ is compact as a limit by norm of compact operators~\cite{9,10}.
\end{proof}

\section{Eigenvalues asymptotics of unbounded operators at the perturbation}
\label{main_theorems_about_asymptotics}

We start with the following simple lemma~\cite{12}.
\begin{lemma}
\label{eigenvalues_perturbation}
Let \(D\) be a normal operator in Hilbert space \(H\) with normalized eigenvectors \(e_n\:(\|e_n\|=1)\,,\,n\in\mathbb{N}\) and corresponding eigenvalues \(\mu_n\): \(D\,e_n=\mu_ne_n\). Let \(T=D+R\), where \(R\) is an arbitrary operator with dense definition domain \(\mathscr{D}(D)\) in \(H\) \((\,\mathscr{D}(D)\subseteq\mathscr{D}(R)\,)\) and $R^*e_n\in H$. Suppose the operator \(T\) has the normalized eigenvectors \(g_n\:(\|g_n\|=1)\,,\,n\in\mathbb{N}\) and corresponding eigenvalues \(\lambda_n\): \(Tg_n=\lambda_ng_n\). If $P_n$ and $Q_n$ are projectors on the eigenvectors $g_n$ and $e_n$ respectively and
\begin{equation}
\label{projectors_condition}
\|P_n-Q_n\|<1\,,
\end{equation}
then 
\begin{equation}
\label{formula_for_perturb_eigenvalues}
\lambda_n=\mu_n+\frac{(Re_n,e_n)}{\sqrt{1-\|P_n-Q_n\|^2}}+\frac{((g_n-e_n),R^*e_n)}
{\sqrt{1-\|P_n-Q_n\|^2}}\,.
\end{equation}
\end{lemma}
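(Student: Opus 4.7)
The plan is to extract $\lambda_n$ by testing the eigenvalue equation $Tg_n=\lambda_n g_n$ against $e_n$, and then to translate the resulting inner product $(g_n,e_n)$ into the quantity $\sqrt{1-\|P_n-Q_n\|^2}$ by a direct $2\times 2$ computation of the rank-two self-adjoint operator $P_n-Q_n$.

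First I would take the scalar product of $Tg_n=\lambda_n g_n$ with $e_n$. Writing $T=D+R$ and using that $D$ is normal with $De_n=\mu_n e_n$ (so $D^*e_n=\overline{\mu_n} e_n$), we get $(Dg_n,e_n)=(g_n,D^*e_n)=\mu_n(g_n,e_n)$, hence
$$(\lambda_n-\mu_n)(g_n,e_n)=(Rg_n,e_n)=(g_n,R^*e_n),$$
where the hypothesis $R^*e_n\in H$ is what legitimises the adjoint on the right. Decomposing $g_n=e_n+(g_n-e_n)$ in the first slot gives
$$(\lambda_n-\mu_n)(g_n,e_n)=(Re_n,e_n)+\bigl((g_n-e_n),R^*e_n\bigr).$$

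Next I would identify $(g_n,e_n)$ with $\sqrt{1-\|P_n-Q_n\|^2}$. Since $P_n$ and $Q_n$ are orthogonal projections onto one-dimensional subspaces of $\mathrm{span}\{e_n,g_n\}$, their difference is a self-adjoint operator of rank at most two whose range lies in this two-dimensional subspace; its norm therefore equals the norm of its matrix in an orthonormal basis of that span. Writing $g_n=c\,e_n+s\,f$ with $c=(g_n,e_n)$, $|c|^2+|s|^2=1$, a short calculation gives
$$P_n-Q_n\;\longleftrightarrow\;\begin{pmatrix}-|s|^2 & c\bar s\\ \bar c s & |s|^2\end{pmatrix},$$
a Hermitian matrix of trace $0$ and determinant $-|s|^2$, whose eigenvalues are therefore $\pm|s|$. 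Consequently $\|P_n-Q_n\|=|s|=\sqrt{1-|(g_n,e_n)|^2}$.

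The assumption $\|P_n-Q_n\|<1$ now guarantees $(g_n,e_n)\neq 0$; since $P_n$ is unchanged under multiplication of $g_n$ by a unit complex scalar, we may as well choose the phase so that $(g_n,e_n)\ge 0$, giving $(g_n,e_n)=\sqrt{1-\|P_n-Q_n\|^2}$. Dividing the identity of the first step by this positive quantity produces (\ref{formula_for_perturb_eigenvalues}). The only step that is not a one-line manipulation is the norm computation for $P_n-Q_n$, so that is where I would concentrate the effort; everything else is bookkeeping once the normality of $D$ and the hypothesis $R^*e_n\in H$ are used.
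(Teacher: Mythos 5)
Your proposal is correct and follows essentially the same route as the paper: test $Tg_n=\lambda_n g_n$ against $e_n$, use normality of $D$, split off $(Re_n,e_n)$, and convert $(g_n,e_n)$ into $\sqrt{1-\|P_n-Q_n\|^2}$ after fixing the phase of $g_n$. The only difference is that you verify the identity $\|P_n-Q_n\|^2=1-|(g_n,e_n)|^2$ by an explicit $2\times2$ computation (which is correct), whereas the paper simply cites it from Gohberg--Krein.
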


\begin{proof}
By definition $Tg_n=(D+R)g_n=\lambda_n g_n$. It follows that
$$
(Dg_n,e_n)+(Rg_n,e_n)=\lambda_n(g_n,e_n)
$$
or
$$
\mu_n(g_n,e_n)+(Re_n,e_n)+(R(g_n-e_n),e_n)=\lambda_n(g_n,e_n)\,.
$$
For the projectors $P_n=(\,\cdot\,,g_n)\,g_n$ and $Q_n=(\,\cdot\,,e_n)\,e_n$ one has the formula~\cite{6}
$$
\|P_n-Q_n\|^2=1-|(g_n,e_n)|^2\,.
$$
Hence, by virtue of (\ref{projectors_condition}), \((g_n,e_n)\ne 0\) and we can write
\begin{equation}
\label{lemma_formula1}
\lambda_n=\mu_n+\frac{(Re_n,e_n)}{(g_n,e_n)}+\frac{(R(g_n-e_n),e_n)}
{(g_n,e_n)}\,.
\end{equation}
We can always choose eigenvectors \(g_n\) and \(e_n\) such that $(g_n,e_n)\in\mathbb{R}$ and 
$(g_n,e_n)>0$. Then \((g_n,e_n)=\sqrt{1-\|P_n-Q_n\|^2}\) and the formula~(\ref{lemma_formula1}) yields
$$
\lambda_n=\mu_n+\frac{(Re_n,e_n)}{\sqrt{1-\|P_n-Q_n\|^2}}+\frac{((g_n-e_n),R^*e_n)}
{\sqrt{1-\|P_n-Q_n\|^2}}\,.
$$
The operator \(R^*\) exists as the domain of \(R\) is dense in \(H\).
\end{proof}

Let us note that the conditions of this lemma do not require neither discreteness of operators spectra no the boundedness of perturbation \(R\). It is important only that $Re_n,R^*e_n\in H$. Besides the corresponding eigenvectors and eigenvalues have an arbitrary numeration.

\begin{corollary}
If, in addition to the lemma's conditions, \(R^*\) is relatively compact with respect to \(D\) and \(\|P_n-Q_n\|\le q<1\), \(n\in\mathbb{N}\), then
$$
\lambda_n=\mu_n\left(1+O\left(\|R^*D^{-1}e_n\|\right)\right)\,,\quad n\to \infty\,.
$$
\end{corollary}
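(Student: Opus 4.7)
The plan is to insert the corollary's hypotheses into the formula~(\ref{formula_for_perturb_eigenvalues}) supplied by the preceding lemma and estimate each term. Writing
\[
\lambda_n-\mu_n=\frac{(Re_n,e_n)+((g_n-e_n),R^*e_n)}{\sqrt{1-\|P_n-Q_n\|^2}},
\]
the uniform bound \(\|P_n-Q_n\|\le q<1\) forces the denominator to stay \(\ge\sqrt{1-q^2}>0\), so only the numerator requires attention. For the first summand I would rewrite \((Re_n,e_n)=(e_n,R^*e_n)\) and apply Cauchy--Schwarz to get \(|(Re_n,e_n)|\le\|R^*e_n\|\); for the second summand, a second Cauchy--Schwarz together with \(\|g_n-e_n\|\le 2\) gives \(|((g_n-e_n),R^*e_n)|\le 2\|R^*e_n\|\). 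Combining these,
\[
|\lambda_n-\mu_n|\le \frac{3}{\sqrt{1-q^2}}\,\|R^*e_n\|.
\]

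Next I would convert \(\|R^*e_n\|\) into \(\|R^*D^{-1}e_n\|\). Since \(D\) is normal with \(De_n=\mu_ne_n\), for all sufficiently large \(n\) we have \(\mu_n\ne 0\) (the relative \(D\)-compactness of \(R^*\) means \(R^*D^{-1}\) is compact, so in the applications of interest \(|\mu_n|\to\infty\); for the general statement one works on the invariant part of \(D\) where \(\mu_n\ne 0\)). Using \(D^{-1}e_n=\mu_n^{-1}e_n\) yields \(\|R^*e_n\|=|\mu_n|\cdot\|R^*D^{-1}e_n\|\), and dividing by \(\mu_n\) gives
\[
\left|\frac{\lambda_n-\mu_n}{\mu_n}\right|\le \frac{3}{\sqrt{1-q^2}}\,\|R^*D^{-1}e_n\|,
\]
which is exactly \(\lambda_n=\mu_n\bigl(1+O(\|R^*D^{-1}e_n\|)\bigr)\).

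Finally, to justify that this is genuinely an asymptotic statement (i.e.\ that the \(O\)-term tends to zero), I would invoke Lemma~\ref{lemma_compact}: the orthonormal system \(\{e_n\}\) is sent by the compact operator \(R^*D^{-1}\) to a sequence with \(\|R^*D^{-1}e_n\|\to 0\). The main subtlety, and essentially the only place where care is required, is the passage from \(\|R^*e_n\|\) to \(\|R^*D^{-1}e_n\|\): one must be precise about what ``relatively compact with respect to \(D\)'' means for an unbounded \(D\), and ensure that \(\mu_n\ne 0\) on the tail (if \(0\) is an eigenvalue of \(D\), one simply discards finitely many indices, which does not affect the asymptotics). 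Everything else is just Cauchy--Schwarz applied to the explicit formula of Lemma~\ref{eigenvalues_perturbation}.
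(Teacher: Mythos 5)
Your proposal is correct and follows essentially the same route as the paper: both arguments bound the denominator in formula~(\ref{formula_for_perturb_eigenvalues}) below by \(\sqrt{1-q^2}\), use Cauchy--Schwarz on the numerator after inserting \(D^{-1}D\) (equivalently, using \(D^{-1}e_n=\mu_n^{-1}e_n\)) to trade \(\|R^*e_n\|\) for \(|\mu_n|\,\|R^*D^{-1}e_n\|\), and then invoke Lemma~\ref{lemma_compact} applied to the compact operator \(R^*D^{-1}\) to conclude that the error term tends to zero. The only cosmetic differences are that the paper keeps the inner products exact before estimating (using the sharper bound \(\|g_n-e_n\|\le\sqrt{2}\,\|P_n-Q_n\|\) where you use the trivial bound \(2\)), which does not affect the conclusion.
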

\begin{proof}
Since the operator \(R^*\) is relatively compact with respect to \(D\), there exist the bounded operator \(D^{-1}\) and the operator \(R^*D^{-1}\) is compact such that \(\mathscr{D}(D)\subseteq \mathscr{D}(R^*)\). From~(\ref{formula_for_perturb_eigenvalues}) it follows that
$$
\lambda_n=\mu_n+\frac{(e_n,R^*D^{-1}De_n)}{\sqrt{1-\|P_n-Q_n\|^2}}+\frac{((g_n-e_n),R^*D^{-1}De_n)}
{\sqrt{1-\|P_n-Q_n\|^2}}=
$$
$$
=\mu_n+\bar \mu_n\frac{(e_n,R^*D^{-1}e_n)}{\sqrt{1-\|P_n-Q_n\|^2}}+\bar \mu_n\frac{((g_n-e_n),R^*D^{-1}e_n)}{\sqrt{1-\|P_n-Q_n\|^2}}=\mu_n(1+c_n)\,,
$$
$$
|c_n|\le \frac{\|R^*D^{-1}e_n\|}{\sqrt{1-q^2}}+\frac{\|g_n-e_n\|\cdot\|R^*D^{-1}e_n\|}{\sqrt{1-q^2}}\,.
$$
As \((g_n,e_n)>0\), we have \(\|g_n-e_n\|\le\sqrt{2}\|P_n-Q_n\|\le q\) and
$$
c_n=O\left(\|R^*D^{-1}e_n\|\right)\,.
$$
By Lemma~(\ref{lemma_compact}) \(\|R^*D^{-1}e_n\|\to 0\) as the eigenvectors of normal operator form orthonormal system.
\end{proof}

From this lemma it follows that the asymptotics of perturbed eigenvalues at \(n\to\infty\) essentially depends on the behaviour of \(\|P_n-Q_n\|\). In the work~\cite{5} it was shown that if the perturbation \(R\) is compact and \(|\mu_i-\mu_k|\ge\epsilon_0>0\,,\:\forall i\ne k\), then \(\|P_n-Q_n\|<1\) for all sufficiently large \(n\). We shall show that in this case in fact \(\|P_n-Q_n\|\to 0\) as \(n\to\infty\). Besides, we shall consider arbitrary perturbations and unperturbed eigenvalues without non accumulative restriction. To avoid technical complications and for the next applications we consider the case when the operators \(T\) and \(D\) are selfadjoint.

\begin{theorem}
\label{eigenvalues_at_relatively_compact_R}
Let \(D\) be a selfadjoint operator in Hilbert space \(H\) with orthonormal eigenvectors \(e_n\,,\,n\in\mathbb{N}\) formed basis in \(H\) and corresponding eigenvalues \(\mu_n\) \((\,\mu_1<\mu_2<\ldots<\mu_n\ldots\,,\,\lim \mu_n=+\infty\,)\). Let \(R\) be either symmetric operator relatively compact with respect to \(D\) or arbitrary compact operator. Denote
$$
\Delta_n^2\equiv\sum\limits_{m=1}^{\infty}\frac{\|Re_m\|^2}{\|(D-\mu_nE-r_nE_n)e_m\|^2}\,,
$$
where  \(r_n=\dfrac{1}{2}\,\mbox{min}\{\mu_n-\mu_{n-1}\,;\,\mu_{n+1}-\mu_n\}\) and the operator \(E_n\) is defined by
$$
E_ne_m=\left\{\begin{array}{r}
e_m\,,\:m>n\\
-e_m\,,\:m\le n
\end{array}\right.\,.
$$
Suppose \(\Delta_n\) exist and \(\Delta_n\to 0\). Then for the eigenvalues \(\lambda_n\) of the operator \(T=D+R\) (having discrete spectrum too) we have the following asymptotic formula
$$
\lambda_n=\mu_n+(Re_n,e_n)+O\left(\Delta_n\|R^*e_n\|\right)\,,\quad n\to\infty\,.
$$
\end{theorem}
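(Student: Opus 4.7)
The plan is to strengthen Lemma~\ref{eigenvalues_perturbation} by proving $\|P_n-Q_n\|=O(\Delta_n)$ through a Riesz-projector computation. The starting observation is that $B_n:=D-\mu_nE-r_nE_n$ is diagonal in $\{e_m\}$ with entries $b_m$ satisfying $|b_m|\ge r_n$ (with equality only at $m=n$), so
\[
\Delta_n^2=\sum_m\frac{\|Re_m\|^2}{b_m^2}=\|RB_n^{-1}\|_2^2;
\]
that is, $\Delta_n$ is exactly the Hilbert--Schmidt norm of $RB_n^{-1}$. Under either hypothesis the operator $R(z-D)^{-1}$ is bounded: trivially if $R$ is compact, and through the factorisation $R(z-D)^{-1}=(RD^{-1})[z(z-D)^{-1}-I]$ if $R$ is relatively compact with respect to $D$.

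Let $\Gamma_n:=\{z:|z-\mu_n|=r_n\}$. The separation of the $\mu_m$ gives $|z-\mu_m|\ge|b_m|$ for every $m$ and every $z\in\Gamma_n$, whence
\[
\|R(z-D)^{-1}\|_2^2=\sum_m\frac{\|Re_m\|^2}{|z-\mu_m|^2}\le\Delta_n^2.
\]
Once $\Delta_n<1/2$, a Neumann series yields $(z-T)^{-1}=(z-D)^{-1}[I-R(z-D)^{-1}]^{-1}$ with $\|(z-T)^{-1}\|\le 2/r_n$ on $\Gamma_n$, so the Riesz projectors $P_n$ and $Q_n$ of $T$ and $D$ associated to $\Gamma_n$ are well defined, and the second resolvent identity gives
\[
P_n-Q_n=\frac{1}{2\pi i}\oint_{\Gamma_n}(z-D)^{-1}[I-R(z-D)^{-1}]^{-1}R(z-D)^{-1}\,dz.
\]
Estimating the integrand in operator norm and using that the length of $\Gamma_n$ is $2\pi r_n$ yields $\|P_n-Q_n\|\le 2\Delta_n$. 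Since $P_n$ then has the same (one-dimensional) rank as $Q_n$, exactly one eigenvalue of $T$ lies inside each $\Gamma_n$; because the discs are pairwise disjoint and jointly capture all but finitely many eigenvalues of $T$, this eigenvalue is identified with $\lambda_n$ after relabelling at most finitely many indices.

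With $\|P_n-Q_n\|\le 2\Delta_n$ at hand, the identity $\alpha_n:=(g_n,e_n)=\sqrt{1-\|P_n-Q_n\|^2}$ from Lemma~\ref{eigenvalues_perturbation} gives $\alpha_n=1+O(\Delta_n^2)$ and $\|g_n-e_n\|\le\sqrt{2}\,\|P_n-Q_n\|=O(\Delta_n)$. Substituting into~(\ref{formula_for_perturb_eigenvalues}) and isolating the leading correction,
\[
\lambda_n-\mu_n-(Re_n,e_n)=(Re_n,e_n)(\alpha_n^{-1}-1)+\alpha_n^{-1}(g_n-e_n,R^*e_n);
\]
the first summand is $O(\Delta_n^2|(Re_n,e_n)|)=O(\Delta_n^2\|R^*e_n\|)$ via $|(Re_n,e_n)|=|(e_n,R^*e_n)|\le\|R^*e_n\|$, the second is $O(\Delta_n\|R^*e_n\|)$, and both are absorbed into the desired remainder, yielding the claimed three-term formula. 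The chief technical hurdle is the Hilbert--Schmidt estimate on $\Gamma_n$ when $R$ is unbounded, which forces the detour through $RD^{-1}$, together with the matching of indices in the ordered spectrum of $T$; in the selfadjoint setting the latter can be handled by a continuity argument along the one-parameter family $D+tR$, $t\in[0,1]$, combined with the Riesz-projector bound above.
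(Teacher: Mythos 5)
Your proposal is correct and follows essentially the same route as the paper: bound $\|R(z-D)^{-1}\|$ by $\Delta_n$ on the circle $|z-\mu_n|=r_n$, expand the resolvent of $T$ in a Neumann series to get the Riesz projector estimate $\|P_n-Q_n\|=O(\Delta_n)$ and the rank argument, then feed this into Lemma~\ref{eigenvalues_perturbation} exactly as you do. The only differences are cosmetic (your identification of $\Delta_n$ as the Hilbert--Schmidt norm of $RB_n^{-1}$ and the explicit factorisation through $RD^{-1}$ in the relatively compact case, both of which the paper handles implicitly via the same termwise sum).
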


\begin{proof}
First note that the operator \(T\) has a discrete spectrum as the operator \(D\)~\cite{10,13}. If \(R\) is symmetric and relatively compact with respect to \(D\) then \(T\) is selfadjoint. We start with a known resolvent identity
$$
(T-\lambda E)^{-1}=(D-\lambda E)^{-1}\left(E+R\,(D-\lambda E)^{-1}\right)^{-1}\,.
$$
If \(\lambda\notin\sigma(D)\) and \(\|R\,(D-\lambda E)^{-1}\|<1\), this formula defines a resolvent \((T-\lambda E)^{-1}\) as a convergent by norm operator series
\begin{equation}
\label{resolvent_series}
(T-\lambda E)^{-1}=(D-\lambda E)^{-1}\left(E+\sum\limits_{k=1}^\infty(-1)^k\left[R\,(D-\lambda E)^{-1}\right]^k\right)\,.
\end{equation}
Let \(\lambda\in C_n\), where \(C_n\) is a circle: \(C_n=\{\lambda\in\mathbb{C}\,:\,|\lambda-\mu_n|=r_n\}\) with some radius \(r_n<\mbox{dist}(\lambda_n\,,\,\sigma(D)\setminus\lambda_n))\).
For any \(f\in H\) one has
$$
(D-\lambda E)^{-1}f=\sum\limits_{m=1}^\infty\frac{(f,e_m)\,e_m}{\mu_m-\lambda}\quad (\lambda\in C_n)\,.
$$
and
$$
R\,(D-\lambda E)^{-1}f=\sum\limits_{m,k=1}^\infty\frac{(f,e_m)\,(Re_m,e_k)\,e_k}{\mu_m-\lambda}\,,
$$
so that
$$
\|R\,(D-\lambda E)^{-1}f\|^2=\sum\limits_{k=1}^\infty\left|\sum\limits_{m=1}^\infty\frac{(f,e_m)\,(Re_m,e_k)}{\mu_m-\lambda}\right|^2\le\sum\limits_{k=1}^\infty\sum\limits_{m=1}^\infty\frac{|(Re_m,e_k)|^2}{|\mu_m-\lambda|^2}\,\|f\|^2\,.
$$
Therefore
$$
\|R\,(D-\lambda E)^{-1}\|^2\le\sum\limits_{k=1}^\infty\sum\limits_{m=1}^\infty\frac{|(Re_m,e_k)|^2}{|\mu_m-\lambda|^2}=\sum\limits_{m=1}^\infty\frac{\|Re_m\|^2}{|\mu_m-\lambda|^2}\quad (\lambda\in C_n)\,.
$$
Consider the sum in the right hand side of this inequality. We have for \(\lambda\in C_n\) 
$$
\sum\limits_{m=1}^\infty\frac{\|Re_m\|^2}{|\mu_m-\lambda|^2}=\sum\limits_{m=1}^{n-1}\frac{\|Re_m\|^2}{|\mu_m-\lambda|^2}+\frac{\|Re_n\|^2}{r_n^2}+\sum\limits_{m=n+1}^\infty\frac{\|Re_m\|^2}{|\mu_m-\lambda|^2}\le 
$$
$$
\le\sum\limits_{m=1}^{n-1}\frac{\|Re_m\|^2}{(\mu_m-\mu_n+r_n)^2}+\frac{\|Re_n\|^2}{r_n^2}+\sum\limits_{m=n+1}^\infty\frac{\|Re_m\|^2}{(\mu_m-\mu_n-r_n)^2}=
$$
$$
=\sum\limits_{m=1}^{\infty}\frac{\|Re_m\|^2}{\|(D-\mu_nE-r_nE_n)e_m\|^2}=\Delta_n^2\,.
$$
Putting \(r_n=\dfrac{1}{2}\,\mbox{min}\{\mu_n-\mu_{n-1}\,;\,\mu_{n+1}-\mu_n\}\), we obtain for \(\lambda\in C_n\)
$$
\|R\,(D-\lambda E)^{-1}\|\le\Delta_n
$$
and \(\Delta_n\to 0\) because of assumption of the theorem. Therefore for all sufficiently large \(n\) the series~(\ref{resolvent_series}) converges and defines the resolvent \((T-\lambda E)^{-1}\). Hence one can define the Riesz projector
$$
P_n=-\frac{1}{2\pi i}\oint\limits_{C_n}(T-\lambda E)^{-1}\,d\lambda=Q_n-\frac{1}{2\pi i}\sum\limits_{k=1}^\infty(-1)^k\oint\limits_{C_n}(D-\lambda E)^{-1}\left[R\,(D-\lambda E)^{-1}\right]^k\,d\lambda\,,
$$
where
$$
Q_n=-\frac{1}{2\pi i}\oint\limits_{C_n}(D-\lambda E)^{-1}\,d\lambda=(\,\cdot\,,e_n)\,e_n\,.
$$
It follows that
$$
\|P_n-Q_n\|\le\frac{1}{2\pi}\sum\limits_{k=1}^\infty\oint\limits_{C_n}\|(D-\lambda E)^{-1}\|\cdot\left\|R\,(D-\lambda E)^{-1}\right\|^k\,d\lambda\le\frac{1}{2\pi}\sum\limits_{k=1}^\infty
\frac{\Delta_n^k}{r_n}\oint\limits_{C_n}\,d\lambda=\frac{\Delta_n}{1-\Delta_n}
$$
whence
$$
\|P_n-Q_n\|=O\left(\Delta_n\right)\,,\quad n\to\infty\,.
$$
Hence  \(\|P_n-Q_n\|<1\) for all sufficiently large \(n\) and \(\mbox{rank}\,P_n=
\mbox{rank}\,Q_n=1\). It follows that for all \(n\) large enough inside the circle \(C_n\) there is exactly one eigenvalue \(\lambda_n\) of the operator \(T\). From lemma~(\ref{eigenvalues_perturbation}) we obtain as \(n\to\infty\)
$$
\lambda_n=\mu_n+(Re_n,e_n)+(Re_n,e_n)\cdot O\left(\Delta_n^2\right)+O\left(\Delta_n\|R^*e_n\|\right)=\mu_n+(Re_n,e_n)+O\left(\Delta_n\|R^*e_n\|\right)\,.
$$
\end{proof}

\begin{remark}
\label{R_by_R*}
When \(R\) is arbitrary compact, in the definition of \(\Delta_n\) one can replace \(R\) by \(R^*\). 
\end{remark}
\begin{proof}
If we start from the identity
$$
(T-\lambda E)^{-1}=\left(E+(D-\lambda E)^{-1}R\right)^{-1}(D-\lambda E)^{-1}\,,
$$
we come to the series
$$
(T-\lambda E)^{-1}=\left(E+\sum\limits_{k=1}^\infty(-1)^k\left[(D-\lambda E)^{-1}R\right]^k\right)(D-\lambda E)^{-1}
$$
instead of (\ref{resolvent_series}). Since \(\|(D-\lambda E)^{-1}R\|=\|R^*(D-\bar{\lambda} E)^{-1}\|\) according to the property~(\ref{Symmetric_property}), by the same way we will obtain
$$
\|(D-\lambda E)^{-1}R\|\le\Delta_n\,,
$$
where in the definition of \(\Delta_n\) we will have \(R^*\) instead of \(R\).
\end{proof}

From the expression for \(\Delta_n\)
$$
\Delta_n^2=\sum\limits_{m=1}^{n-1}\frac{\|Re_m\|^2}{(\mu_m-\mu_n+r_n)^2}+\frac{\|Re_n\|^2}{r_n^2}+\sum\limits_{m=n+1}^\infty\frac{\|Re_m\|^2}{(\mu_m-\mu_n-r_n)^2}
$$
it follows that \(\Delta_n\to 0\) implies \(\dfrac{\|Re_n\|}{r_n}\to 0\) or \(\dfrac{\|Re_n\|}{\mu_{n+1}-\mu_n}\to 0\). So in order that \(\Delta_n\to 0\) for any compact operator \(R\) it is necessary that \(\forall n\in\mathbb{N}\:\:\mu_{n+1}-\mu_n\ge\epsilon_0>0\). Let us show that this condition is also sufficient.

Suppose that the operator \(R\) is compact and \(\forall n\in\mathbb{N}\:\:\mu_{n+1}-\mu_n\ge\epsilon_0>0\). We can put \(r_n=\epsilon_0/2\). One has for \(i>j\)
$$
\mu_i-\mu_j\ge(i-j)\,\epsilon_0
$$
and
$$
\Delta_n^2\le\sum\limits_{m=1}^{n-1}\frac{\|Re_m\|^2}{((n-m)\,\epsilon_0-\epsilon_0/2)^2}+\frac{4\,\|Re_n\|^2}{\epsilon_0^2}+\sum\limits_{m=n+1}^\infty\frac{\|Re_m\|^2}{((m-n)\,\epsilon_0-\epsilon_0/2)^2}
$$
or
$$
\epsilon_0^2\,\Delta_n^2\le\sum\limits_{m=1}^{n-1}\frac{\|Re_m\|^2}{(n-m-1/2)^2}+4\,\|Re_n\|^2+\sum\limits_{m=n+1}^\infty\frac{\|Re_m\|^2}{(m-n-1/2)^2}\,.
$$
Both sums tends to zero at \(n\to\infty\). First sum tends to zero by the theorem on regular transformations~\cite{8}. Actually we have
$$
t_n=\sum\limits_{m=1}^{n-1}\frac{\|Re_m\|^2}{(n-m-1/2)^2}=\sum\limits_{m=1}^\infty C_{n,m}\,s_m\,,
$$
where
$$
s_m=\|Re_m\|^2\,,\quad C_{n,m}=\left\{\begin{array}{r}
(n-m-1/2)^{-2}\,,\,m<n\\
0\quad\qquad\,,\,m\ge n
\end{array}\right.\,.
$$
Since
$$
\sum\limits_{m=1}^\infty |C_{n,m}|=\sum\limits_{k=1}^{n-1}\frac{1}{(k-1/2)^2}=4\sum\limits_{k=1}^{n-1}\frac{1}{(2k-1)^2}<4\sum\limits_{k=1}^{\infty}\frac{1}{(2k-1)^2}=\frac{\pi^2}{2}\,,
$$
$$
\lim\limits_{n\to\infty}C_{n,m}=0\,,\quad m\in\mathbb{N}\,,
$$
the transformation \(C_{n,m}\) is regular and from \(s_m\to 0\) it follows that \(t_n\to 0\).

For a second sum one has
$$
q_n=\sum\limits_{m=n+1}^\infty\frac{\|Re_m\|^2}{(m-n-1/2)^2}=\sum\limits_{k=1}^\infty\frac{s_{k+n}}{(k-1/2)^2}\,.
$$
For any \(\epsilon>0\) there exists \(N\in\mathbb{N}\) such that \(s_n<2\,\epsilon/\pi^2\) for all \(n>N\).
For such values of \(n\) we have
$$
q_n<\frac{2\,\epsilon}{\pi^2}\,\sum\limits_{k=1}^\infty\frac{1}{(k-1/2)^2}=\epsilon\,.
$$
Therefore \(q_n\to 0\) and \(\Delta_n\to 0\). Thus we obtain

\begin{theorem}
\label{delta_to_zero}
In order that \(\Delta_n\to 0\) for any compact operator \(R\) it is necessary and sufficient that \(\forall n\in\mathbb{N}\:\:\mu_{n+1}-\mu_n\ge\epsilon_0>0\).
\end{theorem}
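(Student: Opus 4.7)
The plan is to prove the two directions separately, using the explicit three-piece decomposition of $\Delta_n^2$ that appears in the definition (the sums over $m<n$ and $m>n$ together with the isolated $m=n$ term).

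For necessity I would argue by contrapositive. Suppose $\liminf_{n\to\infty}(\mu_{n+1}-\mu_n)=0$, so there is a subsequence $n_k$ along which $r_{n_k}\to 0$. I would construct a \emph{diagonal} compact operator $R=\sum_m \alpha_m (\,\cdot\,,e_m)\,e_m$ with $\alpha_m\to 0$ (hence $R$ compact by Theorem~2.1 applied to the band matrix, or directly as a diagonal operator with vanishing diagonal) chosen so that $\alpha_{n_k}/r_{n_k}\not\to 0$; taking $\alpha_{n_k}:=\sqrt{r_{n_k}}$ and $\alpha_m$ any null sequence otherwise works. Since the isolated term $\|Re_n\|^2/r_n^2=\alpha_n^2/r_n^2$ is a summand in $\Delta_n^2$, we have $\Delta_{n_k}\not\to 0$. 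This contradicts the hypothesis and forces $\mu_{n+1}-\mu_n\ge\epsilon_0>0$ for some $\epsilon_0>0$.

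For sufficiency, assume $\mu_{n+1}-\mu_n\ge\epsilon_0>0$ and set $r_n=\epsilon_0/2$. Using $\mu_i-\mu_j\ge (i-j)\epsilon_0$ for $i>j$, I would bound
$$
\epsilon_0^2\,\Delta_n^2\ \le\ \sum_{m=1}^{n-1}\frac{s_m}{(n-m-1/2)^2}\ +\ 4\,s_n\ +\ \sum_{m=n+1}^{\infty}\frac{s_m}{(m-n-1/2)^2},
$$
where $s_m:=\|Re_m\|^2$. Since $R$ is compact and $\{e_m\}$ is an orthonormal basis, Lemma~2.1 gives $s_m\to 0$. The middle term vanishes immediately. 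For the forward tail, reindexing by $k=m-n$ yields $\sum_{k\ge 1}s_{n+k}/(k-1/2)^2$, which tends to $0$ by the dominated/tail argument since $\sum 1/(k-1/2)^2<\infty$ and $s_{n+k}\to 0$ for every fixed $k$.

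The main obstacle is the \emph{backward} tail $\sum_{m<n} s_m/(n-m-1/2)^2$, because the largest weights sit on indices $m$ just below $n$, where $s_m$ need not be small. The clean resolution is to recognize this as the action of a matrix $C_{n,m}=(n-m-1/2)^{-2}$ for $m<n$ and $0$ otherwise on the null sequence $\{s_m\}$, then verify the Toeplitz/regular-transformation hypotheses: $\sum_m |C_{n,m}|\le 4\sum_{k\ge 1}(2k-1)^{-2}=\pi^2/2$ uniformly in $n$, and $\lim_{n\to\infty} C_{n,m}=0$ for each fixed $m$. Invoking the regular-transformation theorem already cited in the paper then gives the required convergence to zero, completing the argument.
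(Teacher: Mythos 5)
Your proposal is correct and follows essentially the same route as the paper: sufficiency via the bound $\epsilon_0^2\Delta_n^2\le\sum_{m<n}s_m(n-m-1/2)^{-2}+4s_n+\sum_{m>n}s_m(m-n-1/2)^{-2}$ with the regular-transformation theorem for the backward sum and a tail estimate for the forward sum, and necessity from the isolated diagonal term $\|Re_n\|^2/r_n^2$. Your explicit diagonal counterexample with $\alpha_{n_k}=\sqrt{r_{n_k}}$ merely fills in a detail the paper leaves implicit.
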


However there exist operators \(R\) (compact or not) such that \(\Delta_n\to 0\) at \((\mu_{n+1}-\mu_n)\to 0\).\\
Consider the case \(\mu_n=n^2\) (see also \cite{12}).  In this case \(\mu_{n+1}-\mu_n=2n+1\) and \(r_n=n-1/2\). One has
$$
\Delta_n^2=\sum\limits_{m=1}^{n-1}\frac{\|Re_m\|^2}{(n^2-m^2-n+1/2)^2}+\frac{\|Re_n\|^2}{(n-1/2)^2}+\sum\limits_{m=n+1}^\infty\frac{\|Re_m\|^2}{(m^2-n^2-n+1/2)^2}=
$$
$$
=\sum\limits_{m=1}^{n-1}\frac{\|Re_m\|^2}{((n-1/2)^2-m^2+1/4)^2}+\frac{\|Re_n\|^2}{(n-1/2)^2}+\sum\limits_{m=n+1}^\infty\frac{\|Re_m\|^2}{(m^2-(n+1/2)^2+3/4)^2}<
$$
$$
<\sum\limits_{m=1}^{n-1}\frac{\|Re_m\|^2}{((n-1/2)^2-m^2)^2}+\frac{\|Re_n\|^2}{(n-1/2)^2}+\sum\limits_{m=n+1}^\infty\frac{\|Re_m\|^2}{(m^2-(n+1/2)^2)^2}=
$$
$$
=\sum\limits_{m=1}^{n-1}\frac{\|Re_m\|^2}{(n-1/2-m)^2\,(n-1/2+m)^2}+\frac{\|Re_n\|^2}{(n-1/2)^2}+\sum\limits_{m=n+1}^\infty\frac{\|Re_m\|^2}{(m-n-1/2)^2\,(m+n+1/2)^2}<
$$
$$
<\frac{1}{(n-1/2)^2}\left(\sum\limits_{m=1}^{n-1}\frac{\|Re_m\|^2}{(n-1/2-m)^2}+\|Re_n\|^2+\sum\limits_{m=n+1}^\infty\frac{\|Re_m\|^2}{(m-n-1/2)^2}\right)=
$$
$$
=\frac{1}{(n-1/2)^2}\left(\sum\limits_{k=1}^{n-1}\frac{\|Re_{n-k}\|^2}{(k-1/2)^2}+\|Re_n\|^2+\sum\limits_{k=1}^\infty\frac{\|Re_{k+n}\|^2}{(k-1/2)^2}\right)\,.
$$
Suppose \(\|Re_n\|=O\left(n^\alpha\right)\), \(\alpha<1/2\). Then
$$
\Delta_n^2<\frac{C}{(n-1/2)^2}\left(\sum\limits_{k=1}^{n-1}\frac{(n-k)^{2\alpha}}{(k-1/2)^2}+n^{2\alpha}+\sum\limits_{k=1}^\infty\frac{(k+n)^{2\alpha}}{(k-1/2)^2}\right)<
$$
$$
<\frac{C\,n^{2\alpha}}{(n-1/2)^2}\left(\sum\limits_{k=1}^{n-1}\frac{1}{(k-1/2)^2}+1+\sum\limits_{k=1}^\infty\frac{(k+1)^{2\alpha}}{(k-1/2)^2}\right)\,.
$$
Therefore \(\Delta_n=O\left(n^{\alpha-1}\right)\). If also \(\|R^*e_n\|=O\left(n^\alpha\right)\), then the theorem~(\ref{eigenvalues_at_relatively_compact_R}) gives
$$
\lambda_n=n^2+(Re_n,e_n)+O\left(\frac{1}{n^{1-2\alpha}}\right)\,,\:n\to\infty\,.
$$
In~\cite{12} this formula was obtained at \(\alpha=0\).

Let's obtain another result which we shall use in the next section. The following theorem is a generalization of the corresponding result from~\cite{11}.

\begin{theorem}
\label{noncompact_perturbation_oscillator}
Let \(D\) be a selfadjoint operator in Hilbert space \(H\) with orthonormal eigenvectors \(e_n\,,\,n\in\mathbb{N}\) formed basis in \(H\) and corresponding eigenvalues \(\mu_n=n\). Let \(R\) be a bounded selfadjoint non-compact operator with matrix \(R_{ij}=(Re_j,e_i)\) such that
$$
\lim\limits_{n\to\infty}R_{n,n+p}=0\,,\quad p\in\mathbb{Z}\,.
$$
Then the eigenvalues \(\lambda_n\) of selfadjoint operator \(T=D+R\) (having discrete spectrum too) satisfy the following asymptotic formula
$$
\lambda_n=n+R_{nn}+\sum\limits_{k\ne n}\frac{|R_{nk}|^2}{n-k}+O\left((\Delta_n+\|Ke_n\|)\,(|R_{nn}|+\|Ke_n\|)\right)\,,
$$
where
$$
\|Ke_n\|^2=\sum_{k\ne n}\frac{|R_{kn}|^2}{(k-n)^2}\,,\quad\Delta_n^2=O\left(\sum\limits_{m=1}^{\infty}\frac{(|R_{mm}|+\|Ke_m\|)^2}{\|(D-nE-E_n/2)e_m\|^2}\right)\,.
$$
\end{theorem}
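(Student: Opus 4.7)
The plan is to eliminate the off-diagonal part of $R$ at first order by a unitary rotation $U = e^{-K}$, which simultaneously moves the classical second-order perturbative correction onto the diagonal of the transformed operator, and then to invoke Theorem~\ref{eigenvalues_at_relatively_compact_R} on $U^{*}TU$. Define the skew-Hermitian matrix $K$ in the basis $\{e_n\}$ by $K_{ij} = R_{ij}/(i-j)$ for $i\ne j$ and $K_{ii}=0$; skew-Hermiticity follows from $R_{ij} = \overline{R_{ji}}$. Because $R_{n,n+p}\to 0$ for every $p$ and the sequence $b_{p} = 1/p$ ($p\ne 0$), $b_{0}=0$ lies in $l_{2}$, Theorem~\ref{compactness_for_nonband_matrix} applied with $V=R$ implies that $K$ is compact; in particular the unitary $U = e^{-K}$ is well defined, and the quantity $\|Ke_n\|^{2} = \sum_{k\ne n}|R_{kn}|^{2}/(k-n)^{2}$ appearing in the statement is precisely the squared $n$-th column norm of $K$.

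The essential algebraic identity is $[D,K] = R - R_{d}$, where $R_{d} := \mathrm{diag}(R_{nn})$; this is immediate from $([D,K])_{ij} = (i-j)K_{ij}$. The Hadamard expansion of $U^{*}TU = e^{K}(D+R)e^{-K}$ then yields
\begin{equation*}
U^{*}TU = D + R_{d} + \tfrac{1}{2}[K,R] + \tfrac{1}{2}[K,R_{d}] + \mathcal{E},
\end{equation*}
where $\mathcal{E}$ collects all iterated commutators of order $\ge 2$ in $K$ (the coefficient $\tfrac12$ in front of $[K,R]$ arises from combining the $+[K,R]$ in $\mathrm{ad}_K(T)$ with the $-\tfrac12[K,R]$ in $\tfrac12\mathrm{ad}_K^{2}(T)$). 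A direct computation using $R_{kn} = \overline{R_{nk}}$ gives the crucial diagonal identity $\tfrac{1}{2}([K,R])_{nn} = \sum_{k\ne n}|R_{nk}|^{2}/(n-k)$, while $([K,R_{d}])_{nn} = 0$ since $K_{nn}=0$. Thus the transformed perturbation $\tilde R := U^{*}TU - D$ has diagonal entry at $(n,n)$ exactly equal to $R_{nn} + \sum_{k\ne n}|R_{nk}|^{2}/(n-k)$, modulo the $(n,n)$-entry of $\mathcal{E}$.

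Since $T$ and $U^{*}TU$ are unitarily equivalent they share the eigenvalue $\lambda_n$, so it suffices to analyse $D + \tilde R$. The operator $\tilde R$ is compact: $R_{d}$ is compact because the hypothesis at $p=0$ forces $R_{nn}\to 0$, and every other term is a finite sum or norm-convergent series of compositions involving the compact operator $K$. Since $\mu_{n+1}-\mu_n = 1$, Theorem~\ref{delta_to_zero} guarantees that the $\Delta_n$-quantity for $\tilde R$ tends to zero, so Theorem~\ref{eigenvalues_at_relatively_compact_R} applies and gives $\lambda_n = n + (\tilde R e_n, e_n) + O(\tilde\Delta_n\|\tilde R e_n\|)$. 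A column-wise estimate $\|\tilde R e_m\| = O(|R_{mm}| + \|Ke_m\|)$ then converts $\tilde\Delta_n^{2}$ into the expression stated in the theorem, while the diagonal of $\tilde R$ supplies the explicit second-order sum. Collecting all contributions (including the $(R_{nn}+\!\sum|R_{nk}|^{2}/(n-k))\cdot O(\|P_n-Q_n\|^{2})$ piece coming from the first term of Lemma~\ref{eigenvalues_perturbation}) produces the stated error $O((\Delta_n + \|Ke_n\|)(|R_{nn}| + \|Ke_n\|))$.

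The principal technical obstacle is the column-wise bound on $\mathcal{E}$: although each $\mathrm{ad}_K^{j}(T)/j!$ is small in operator norm (a geometric series in $\|K\|$), the sharp estimate $\|\mathcal{E} e_m\| = O(\|Ke_m\|(|R_{mm}| + \|Ke_m\|))$ requires exploiting both $K_{mm}=0$ (so that factors of $Ke_k$ appear whenever $K$ is applied to a column) and the structured cancellation between adjacent terms of the Hadamard series. Once this bookkeeping is controlled, the reduction to Theorem~\ref{eigenvalues_at_relatively_compact_R} is routine and the three-term asymptotic formula follows.
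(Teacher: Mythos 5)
Your overall strategy --- the same $K$ with $K_{ij}=R_{ij}/(i-j)$, the commutator identity $[D,K]=R-\mathrm{diag}\{R_{nn}\}$, compactness of $K$ via Theorem~(\ref{compactness_for_nonband_matrix}), and reduction to Theorem~(\ref{eigenvalues_at_relatively_compact_R}) --- is exactly the paper's, and your leading-order algebra is correct: $\tfrac12([K,R])_{nn}=\sum_{k\ne n}|R_{nk}|^2/(n-k)$ and $([K,\mathrm{diag}\{R_{nn}\}])_{nn}=0$. The one place you deviate is the choice of transformation: you conjugate by the unitary $e^{-K}$, while the paper uses the \emph{non-unitary} similarity $E+K$, obtaining $T=(E+K)^{-1}\bigl(D+B(E+K)^{-1}\bigr)(E+K)$ with $B=\mathrm{diag}\{R_{nn}\}+KR$. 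That deviation is precisely where your argument breaks.

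The step you defer as ``bookkeeping'' is in fact false for the unitary conjugation. Your remainder $\mathcal{E}$ contains $\tfrac13[K,[K,R]]=\tfrac13\bigl(K^2R-2KRK+RK^2\bigr)$ (and higher analogues). The column $K^2Re_m=K^2(Re_m)$ has $R$ acting \emph{first}, so neither $K_{mm}=0$ nor compactness gives more than $\|K^2Re_m\|=o(1)$ (with no rate: $Re_m$ is merely weakly null); it is certainly not $O\bigl(\|Ke_m\|(|R_{mm}|+\|Ke_m\|)\bigr)$, so your claimed column bound on $\mathcal{E}$, and hence your formula for $\tilde\Delta_n$, fails. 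On the diagonal, $(K^2Re_n,e_n)=-(KRe_n,Ke_n)$ is only $O(\|Ke_n\|)$; in the intended application $\|Ke_n\|\sim n^{-1/2}\sim|R_{nn}|$, so an $O(\|Ke_n\|)$ error is the same size as the term $R_{nn}$ you are extracting and destroys the asymptotic formula. There is no visible cancellation rescuing this: the order-two part of the Hadamard series is $\tfrac13[K,[K,R]]+\tfrac16[K,[K,\mathrm{diag}\{R_{nn}\}]]$, and the $K^2R+RK^2$ piece survives. The paper's choice $E+K$ avoids the problem by design: $B=\mathrm{diag}\{R_{nn}\}+KR$ always has $K$ to the \emph{left} of $R$, so $B^*e_m=R_{mm}e_m-RKe_m$ obeys $\|B^*e_m\|=O(|R_{mm}|+\|Ke_m\|)$, and expanding $(E+K)^{-1}=E-K+(E+K)^{-1}K^2$ in $(B(E+K)^{-1}e_n,e_n)$ only ever places extra factors of $K$ directly against $e_n$, giving every residual term the form $O\bigl(\|Ke_n\|(|R_{nn}|+\|Ke_n\|)\bigr)$. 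To repair your proof you would either have to prove a quantitative bound on $\|KRe_m\|$ (not available from the hypotheses) or switch to the asymmetric transformation.
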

\begin{proof}
Let us show that there exists an anti-hermitian operator $K$ ($K^*=-K$) such that
\begin{equation}
\label{equivalent}
(E+K)T-D(E+K)=B\,,
\end{equation}
where $B$ is compact operator. This condition means that
$$
T=(E+K)^{-1}(D+B(E+K)^{-1})(E+K)\,.
$$
The inverse operator $(E+K)^{-1}$ exists because $K$ is anti-hermitian (spectrum of \(K\) lies on imaginary axis). Thus the operators $T$ and $D+B(E+K)^{-1}$ are similar and have the same spectrum. If the operator $B$ is compact then, according to the first property~(\ref{Ideal_property}), the operator \(B(E+K)^{-1}\) is also compact. Then from theorems~(\ref{eigenvalues_at_relatively_compact_R}) and~(\ref{delta_to_zero}) we obtain
$$
\lambda_n=n+(B(E+K)^{-1}e_n,e_n)+O\left(\Delta_n\|(E+K^*)^{-1}B^*e_n\|\right)
$$
or
\begin{equation}
\label{t2_3}
\lambda_n=n+(B(E+K)^{-1}e_n,e_n)+O\left(\Delta_n\|B^*e_n\|\right)\,,
\end{equation}
From~(\ref{equivalent}) it follows that
\begin{equation}
\label{t2_4}
B=(E+K)(D+R)-D(I+K)=R+KD-DK+KR=R-[D,K]+KR\,.
\end{equation}
Denote \(R'=R-\mbox{diag}\{R_{nn}\}\) so that \(R'_{nn}=0\) and \(R'_{nm}=R_{nm}\) for \(n\ne m\). Suppose that $[D,K]=R'$ or in matrix form $K_{ij}\,(i-j)=R'_{ij}$. It follows that
\begin{equation}
\label{t2_5}
K_{ij}=\frac{R_{ij}}{i-j}\,,\quad i\ne j\,.
\end{equation}
Let us put \(K_{ii}=0\), \(i\in\mathbb{N}\). As the operator $R$ is selfadjoint the operator $K$ is anti-hermitian. Besides the operator \(K\) is compact. It follows from theorem~(\ref{compactness_for_nonband_matrix}) if we put \(b_i=1/i\) ($i\ne0$).

Now from~(\ref{t2_4}) we find
\begin{equation}
\label{formula_for_compact_B}
B=\mbox{diag}\{R_{nn}\}+KR\,,
\end{equation}
and \(B\) is compact.

Let us look for the operator \((E+K)^{-1}\) in the form \((E+K)^{-1}=E-K+X\). From the identity
$$
E=(E+K)\,(E+K)^{-1}=(E+K)\,(E-K+X)=E-K^2+(E+K)X
$$
we find \(X=(E+K)^{-1}K^2\) and \((E+K)^{-1}=E-K+(E+K)^{-1}K^2\). Using~(\ref{formula_for_compact_B}) we obtain
$$
B(E+K)^{-1}=\mbox{diag}\{R_{nn}\}+KR-\mbox{diag}\{R_{nn}\}\,K-KRK+B(E+K)^{-1}K^2\,.
$$
From this formula it follows that
$$
(B(E+K)^{-1}e_n,e_n)=R_{nn}+(KR\,e_n,e_n)-(KRKe_n,e_n)+(B(E+K)^{-1}K^2e_n,e_n)\,,
$$
as \((\mbox{diag}\{R_{nn}\}\,Ke_n,e_n)=0\). Using~(\ref{t2_5}) we obtain
$$
(B(E+K)^{-1}e_n,e_n)=R_{nn}+\sum\limits_{k\ne n}\frac{|R_{nk}|^2}{n-k}+(RKe_n,Ke_n)+((E+K)^{-1}K^2e_n,B^*e_n)=
$$
$$
=R_{nn}+\sum\limits_{k\ne n}\frac{|R_{nk}|^2}{n-k}+O\left(\|Ke_n\|^2\right)+O\left(\|Ke_n\|\,(|R_{nn}|+\|Ke_n\|)\right)=
$$
$$
=R_{nn}+\sum\limits_{k\ne n}\frac{|R_{nk}|^2}{n-k}+O\left(\|Ke_n\|\,(|R_{nn}|+\|Ke_n\|)\right)\,.
$$
Here we used that \(B^*=\mbox{diag}\{R_{nn}\}-RK\). Substituting this expression in the formula~(\ref{t2_3}) we obtain
$$
\lambda_n=n+R_{nn}+\sum\limits_{k\ne n}\frac{|R_{nk}|^2}{n-k}+O\left(\|Ke_n\|\,(|R_{nn}|+\|Ke_n\|)\right)+O\left(\Delta_n\|B^*e_n\|\right)=
$$
$$
=n+R_{nn}+\sum\limits_{k\ne n}\frac{|R_{nk}|^2}{n-k}+O\left((\Delta_n+\|Ke_n\|)\,(|R_{nn}|+\|Ke_n\|)\right)\,.
$$
From~(\ref{t2_5}) we find
$$
\|Ke_n\|^2=\sum_{k\ne n}\frac{|R_{kn}|^2}{(k-n)^2}\,.
$$
Taking into account the remark (\ref{R_by_R*}), we obtain
$$
\Delta_n^2=\sum\limits_{m=1}^{\infty}\frac{\|(E-K)^{-1}B^*e_m\|^2}{\|(D-nE-E_n/2)e_m\|^2}=
O\left(\sum\limits_{m=1}^{\infty}\frac{(|R_{mm}|+\|Ke_m\|)^2}{\|(D-nE-E_n/2)e_m\|^2}\right)\,.
$$
\end{proof}

\begin{remark}
Note that \(R_{nn}\) and \(\sum\limits_{k\ne n}\dfrac{|R_{nk}|^2}{n-k}\) are the first-order and second-order terms of perturbation theory series for \(\lambda_n\) respectively~{\rm \cite{18,19}}.
\end{remark}

\section{Two-photon quantum Rabi model}
\label{Rabi}

The energy operator (Hamiltonian) of the two-level quantum Rabi model has the form~\cite{1,2,3,4}
\begin{equation}
\label{full_hamiltonian_Rabi}
  \hat {\bf H}=\frac{\Delta}{2}\,\hat{\bf\sigma}_z+
  \hat {\bf a}^+\,\hat {\bf a}+g\,(\,\hat{\bf a}^2+\hat {\bf a}^{+2}\,)\,\hat{\bf\sigma}_x\,,
\end{equation}
where $\hat{\bf\sigma}_z,\hat{\bf\sigma}_x$ are the $2\times2$
matrices of form
$$
\hat{\bf\sigma}_z=\left(\begin{array}{cc} 1 & 0\\0 & -1\end{array}\right)
\,,\quad
\hat{\bf\sigma}_x=\left(\begin{array}{cc} 0 & 1\\1 & 0\end{array}\right)
\,,
$$
$\hat{\bf a}$ and $\hat{\bf a}^+$ are the creation and annihilation operators of the harmonic oscillator satisfying the commutative relation \([\hat{\bf a},\hat{\bf a}^+]=1\) , $g\in(0,1/2)$ is the interaction constant, \(\Delta\in\mathbb{R}\) is the transition frequency in the two-level system. The condition $g\in(0,1/2)$ insures a discrete spectrum of the operator \(\hat {\bf H}\). At the beginning of this section we will use Dirac's notations.

The operator \(\hat {\bf H}\) acts in the Hilbert space \(H=H_1\otimes H_2\), where \(H_1=\{\,|+\rangle,|-\rangle\,\}\) is a Hilbert space of the two-level system ( \(\hat{\bf\sigma}_z\,|\pm\rangle=\pm\,|\pm\rangle\) ) and \(H_2=\{\,|n\rangle\,\}_{n=0}^\infty\) is a Hilbert space of oscillator: \( \hat {\bf a}^+\,\hat {\bf a}\:|n\rangle=n\,|n\rangle\).
So the Hilbert space \(H\) consists of vectors \(|n,\pm\rangle=|n\rangle|\pm\rangle\), \(n\in\mathbb{Z}_+\). One can partition \(H\) into the sum of two subspaces \(H=H'_1\oplus H'_2\), where
$$
H'_1=\{\,|0,+\rangle, |1,+\rangle, |2,-\rangle, |3,-\rangle, |4,+\rangle,|5,+\rangle, |6,-\rangle, |7,-\rangle, \ldots\}\,,
$$
$$
H'_2=\{\,|0,-\rangle, |1,-\rangle, |2,+\rangle, |3,+\rangle, |4,-\rangle,|5,-\rangle, |6,+\rangle, |7,+\rangle, \ldots\}\,.
$$
It is easy to check that \(H'_1\) and \(H'_2\) are invariant subspaces for the operator \(\hat{\bf H}\). In the bases of this subspaces vectors the operator \(\hat{\bf H}\) generates two operators of the form
$$
\hat{\bf H}_i=\hat {\bf a}_i^+\,\hat {\bf a}_i+g\,(\,\hat{\bf a}_i^2+\hat {\bf a}_i^{+2}\,)+\hat{\bf V}_i\,,\quad i=1,2\,,
$$
where $\hat{\bf a}_i$ and $\hat{\bf a}_i^+$ are new creation and annihilation operators and 
\(\hat{\bf V}_i\) are diagonal operators of the form
$$
\hat{\bf V}_1=\frac{\Delta}{2}\,\mbox{diag}\left\{\,1,\,1,-1,-1,\,1,\,1,\,\ldots\,\right\}\,,
$$
$$
\hat{\bf V}_2=\frac{\Delta}{2}\,\mbox{diag}\left\{-1,-1,\,1,\,1,-1,-1,\,\ldots\,\right\}=-\hat{\bf V}_1\,.
$$

Coming back to the mathematical notations we obtain for consideration the operator of the form
\begin{equation}
\label{main_operator_two_Rabi_maodel}
H=H_0+V\,,\quad H_0=a^+a+g\,(a^{2}+a^{+2})\,,
\end{equation}
where \(V\) is a diagonal periodic matrix of the form \(\hat{\bf V}_1\) or \(-\hat{\bf V}_1\). Eigenvalues problem for the operator \(H_0\) can be solved explicitly~\cite{2,3}. Let's consider for 
\(\lambda\in\mathbb{R}\) the orthogonal "contraction" operator \(U\)~\cite{4}
$$
U=e^{\textstyle\lambda\,(a^2-a^{+2})}\,,\quad U^+=U^T=e^{\textstyle-\lambda\,(a^2-a^{+2})}\,,\quad UU^+=U^+U=E\,.
$$
( For \(\lambda\in\mathbb{C}\) one can consider the unitary operator \(U=e^{\textstyle\lambda\,a^2-\bar{\lambda}\,a^{+2}}\). But here we won't need it. )

\begin{lemma}[Hadamard~\cite{4}]
\label{Hadamard_lemma}
For any operators \(A\) and \(B\) the formula
$$
e^A\,B\,e^{-A}=B+\frac{[A,B]}{1!}+\frac{[A,[A,B]]}{2!}+\frac{[A,[A,[A,B]]]}{3!}+\ldots
$$
is valid if the series converges in some operator sense. 
\end{lemma}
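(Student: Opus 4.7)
The plan is to introduce the one-parameter operator-valued function $f(t)=e^{tA}\,B\,e^{-tA}$ and recognize the right-hand side of Hadamard's formula as its Taylor expansion evaluated at $t=1$. First I would differentiate formally: since $\frac{d}{dt}e^{tA}=Ae^{tA}=e^{tA}A$, one gets
$$
f'(t)=Ae^{tA}Be^{-tA}-e^{tA}Be^{-tA}A=[A,f(t)].
$$
Denoting $\mathrm{ad}_A(X):=[A,X]$ and iterating, an easy induction yields $f^{(n)}(t)=\mathrm{ad}_A^n(f(t))$, and in particular
$$
f^{(n)}(0)=\mathrm{ad}_A^n(B)=\underbrace{[A,[A,\dots,[A,B]\dots]]}_{n\ \text{commutators}}.
$$
These are exactly the nested commutators appearing in the statement.

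Next I would assemble the Taylor series of $f$ around $t=0$,
$$
f(t)=\sum_{n=0}^{\infty}\frac{t^{n}}{n!}\,\mathrm{ad}_A^{n}(B),
$$
and set $t=1$, giving $e^{A}Be^{-A}=\sum_{n=0}^{\infty}\frac{1}{n!}\mathrm{ad}_A^{n}(B)$, which is the claimed identity.

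The main obstacle — and the reason the lemma is stated with the proviso ``if the series converges in some operator sense'' — is purely analytic rather than algebraic. For bounded $A$ the series converges in operator norm at once, because $\|\mathrm{ad}_A^n(B)\|\le (2\|A\|)^n\|B\|$, which also justifies differentiating $e^{tA}$ termwise and applying Taylor's theorem with remainder to $f$. For unbounded $A$ (the case relevant here, with $A=\lambda(a^2-a^{+2})$ acting on the oscillator Hilbert space) each step must be interpreted on a dense invariant domain, for example the linear span of the number-state basis $\{|n\rangle\}$, on which all the iterated commutators and the exponentials $e^{\pm tA}$ are well defined as strong limits of their power series. Since the present statement explicitly defers the convergence issue to the context in which it will be applied, the proof reduces to the algebraic computation $f^{(n)}(0)=\mathrm{ad}_A^n(B)$ together with Taylor's theorem for $f$ at $t=1$.
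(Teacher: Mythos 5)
Your argument is correct and is the standard proof of Hadamard's formula: setting $f(t)=e^{tA}Be^{-tA}$, showing $f'(t)=[A,f(t)]$ and hence $f^{(n)}(0)=\mathrm{ad}_A^n(B)$ by induction, and summing the Taylor series at $t=1$. Note, however, that the paper does not prove this lemma at all --- it is quoted from the cited reference (Mandel and Wolf) and immediately applied with $A=\lambda(a^{+2}-a^2)$, $B=a$ --- so there is no proof in the text to compare against. Your treatment of the convergence issue is appropriate: for bounded $A$ the estimate $\|\mathrm{ad}_A^n(B)\|\le(2\|A\|)^n\|B\|$ gives norm convergence, and for the unbounded generators actually used in the paper the identity must be read on a dense invariant domain (e.g.\ finite linear combinations of number states), which is exactly the caveat the lemma's phrase ``if the series converges in some operator sense'' is deferring.
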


Let \(A=\lambda\,(a^{+2}-a^2)\). Then taking into account that \([a^{+2},a]=-2a^+\), \([a^2,a^+]=2a\), one has
$$
[A,a]=\lambda\,[a^{+2},a]=-2\lambda\,a^+\,,
$$
$$
[A,[A,a]]=2\lambda^2\,[a^2,a^+]=4\lambda^2\,a\,,
$$
$$
[A,[A,[A,a]]]=4\lambda^3\,[a^{+2},a]=-(2\lambda)^3a^+\,,\,\ldots \,\mbox{etc}.
$$
Using Hadamard's lemma~(\ref{Hadamard_lemma}) we obtain
\begin{equation}
\label{transformation_a_by_U}
U^+\,a\,U=a-\frac{2\lambda}{1!}\,a^++\frac{(2\lambda)^2}{2!}\,a-\frac{(2\lambda)^3}{3!}\,a^++\ldots=
\mbox{ch}(2\lambda)\,a-\mbox{sh}(2\lambda)\,a^+
\end{equation}
and
$$
U^+\,H_0\,U=\left(\mbox{ch}(2\lambda)\,a^+-\mbox{sh}(2\lambda)\,a\right)\left(\mbox{ch}(2\lambda)\,a-\mbox{sh}(2\lambda)\,a^+\right)+
$$
$$
+g\left(\left(\mbox{ch}(2\lambda)\,a-\mbox{sh}(2\lambda)\,a^+\right)^2+\left(\mbox{ch}(2\lambda)\,a^+-\mbox{sh}(2\lambda)\,a\right)^2\right)\,.
$$
After simple transformations one has
$$
U^+\,H_0\,U=\left(\mbox{ch}^2(2\lambda)-2g\,\mbox{sh}(2\lambda)\,\mbox{ch}(2\lambda)\right)a^+a+
\left(\mbox{sh}^2(2\lambda)-2g\,\mbox{sh}(2\lambda)\,\mbox{ch}(2\lambda)\right)a\,a^+ +
$$
$$
+\left(g\,(\mbox{ch}^2(2\lambda)+\mbox{sh}^2(2\lambda))-\mbox{sh}(2\lambda)\,\mbox{ch}(2\lambda)\right)
\left(a^2+a^{+2}\right)\,.
$$
If \(g\,(\mbox{ch}^2(2\lambda)+\mbox{sh}^2(2\lambda))=\mbox{sh}(2\lambda)\,\mbox{ch}(2\lambda)\) or
\begin{equation}
\label{equation_for_lambda}
\mbox{th}(4\lambda)=2g
\end{equation}
then the transformed operator \(U^+\,H_0\,U\) becomes the oscillator operator. Since \(0<g<1/2\) the equation~(\ref{equation_for_lambda}) has always a solution for \(\lambda\). One has next
$$
U^+\,H_0\,U=\left(\mbox{ch}^2(2\lambda)-4g\,\mbox{sh}(2\lambda)\,\mbox{ch}(2\lambda)+\mbox{sh}^2(2\lambda)\right)a^+a+\mbox{sh}^2(2\lambda)-2g\,\mbox{sh}(2\lambda)\,\mbox{ch}(2\lambda)=
$$
$$
=\left(\mbox{ch}(4\lambda)-2g\,\mbox{sh}(4\lambda)\right)a^+a+\frac{\mbox{ch}(4\lambda)-1}{2}-g\,\mbox{sh}(4\lambda)\,.
$$
Taking into account~(\ref{equation_for_lambda}) we obtain finally
$$
\tilde{H}_0=U^+\,H_0\,U=\omega\left(a^+a+\frac{1}{2}\right)-\frac{1}{2}\,,
$$
where \(\omega=\sqrt{1-4g^2}\). If \(\{e_n\}_{n=0}^\infty\) is a orthonormal basis of matrix representation of~(\ref{main_operator_two_Rabi_maodel}), then
\begin{equation}
\label{diagonal_H_0}
\tilde{H}_0\,e_n=E^{(0)}_n\,e_n\,,\quad E^{(0)}_n=\omega\left(n+\frac{1}{2}\right)-\frac{1}{2}\,,\quad n\in\mathbb{Z}_+\,,\quad \omega=\sqrt{1-4g^2}
\end{equation}
or
$$
H_0\,a_n=E^{(0)}_n\,a_n\,,\quad a_n=U\,e_n\,.
$$
If we put in lemma~(\ref{Hadamard_lemma}) \(A=x\,a^+a\) ( \(x\in\mathbb{R}\) ), \(B=a^+\), then one has 
$$
[A,B]=x\,[a^+a,a^+]=x\,a^+\,,\quad [A,[A,B]]=x^2\,a^+\,,\quad [A,[A,[A,B]]]=x^3\,a^+\,,\:\ldots\mbox{etc}\,.
$$
Hence
\begin{equation}
\label{formula1}
e^{\textstyle x\,a^+a}\,a^+\,e^{\textstyle -x\,a^+a}=a^+\,e^{x}\,.
\end{equation}
By the same way we obtain
\begin{equation}
\label{formula2}
e^{\textstyle -x\,a^+a}\,a\,e^{\textstyle x\,a^+a}=a\,e^{x}\,.
\end{equation}

We will need further the following lemma~\cite{4}
\begin{lemma}
\label{differentiation_function_of_a_and_a+}
Let \(f(x,y)\) be an arbitrary function expanded in a power series and
$$
\frac{\partial f(a,a^+)}{\partial a}\equiv\lim\limits_{\Delta v\to0}\frac{f(a+\Delta v,a^+)-f(a,a^+)}{\Delta v}\,,\quad \frac{\partial f(a,a^+)}{\partial a^+}\equiv\lim\limits_{\Delta v\to0}\frac{f(a,a^++\Delta v)-f(a,a^+)}{\Delta v}\,.
$$
Then
$$
\frac{\partial f(a,a^+)}{\partial\,a}=-[a^+,f(a,a^+)]\,,\quad 
\frac{\partial f(a,a^+)}{\partial\,a^+}=[a,f(a,a^+)]\,.
$$
\end{lemma}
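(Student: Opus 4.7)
The plan is to prove both identities by reducing to monomials and then invoking the fact that each side defines a derivation on the algebra generated by $a$ and $a^+$. Since $f(a,a^+)$ is given by a power series, by linearity and formal convergence it suffices to verify the identities on an arbitrary monomial in $a$ and $a^+$, with its factors written in a fixed order (as the two operators do not commute).

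The key observation is that both sides of each identity act as derivations of this noncommutative algebra. On the left, the partial derivative $\partial/\partial a$ is obtained by substituting $a+\Delta v$ for $a$ and differentiating in the scalar parameter $\Delta v$ at zero; since $\Delta v$ is a $c$-number it commutes with everything, and the substitution respects the factor ordering, so the Leibniz rule
$$
\frac{\partial(gh)}{\partial a}=\frac{\partial g}{\partial a}\,h+g\,\frac{\partial h}{\partial a}
$$
holds on products, and similarly for $\partial/\partial a^+$. On the right, the inner maps $g\mapsto[a,g]$ and $g\mapsto-[a^+,g]$ satisfy the standard Leibniz identity $[X,gh]=[X,g]\,h+g\,[X,h]$ and are therefore derivations as well.

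Two derivations that agree on a generating set of an algebra agree throughout, so it is enough to check the identities on $f=a$ and $f=a^+$. Using the canonical commutation relation $[a,a^+]=1$, each check is a one-line verification: $\partial a/\partial a=1=-[a^+,a]$, $\partial a/\partial a^+=0=[a,a]$, $\partial a^+/\partial a=0=-[a^+,a^+]$ and $\partial a^+/\partial a^+=1=[a,a^+]$. From this the identities extend inductively to every monomial and hence, by linearity, to the whole power series. The main subtlety — and what I would flag as the only real point to handle carefully — is that the formal partial derivatives $\partial/\partial a$ and $\partial/\partial a^+$ must be interpreted as acting on the symbolic monomial without reordering its noncommuting factors (the parameter $\Delta v$ shifts $a$ in place); once this is set up correctly, the Leibniz rule above is immediate and the derivation argument closes the proof.
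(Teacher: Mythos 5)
Your argument is correct. Note that the paper itself does not prove this lemma; it is quoted from the reference on quantum optics (Mandel and Wolf), so there is no in-paper proof to compare against. Your route — observe that both $\partial/\partial a$ and $-[a^+,\cdot\,]$ (respectively $\partial/\partial a^+$ and $[a,\cdot\,]$) are derivations of the algebra generated by $a$ and $a^+$, check agreement on the generators via $[a,a^+]=1$, and extend by the Leibniz rule to monomials and by linearity to the power series — is the standard and cleanest way to establish the identity, and you correctly identify the one genuine subtlety, namely that the substitution $a\mapsto a+\Delta v$ must act in place without reordering noncommuting factors (which is exactly what makes the substitution an algebra homomorphism and hence the difference quotient obey the product rule). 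The only caveat worth recording is that, as with the Hadamard lemma the paper states just before, the extension from polynomials to the full power series is formal and presumes convergence in a suitable operator sense; the lemma is stated at that formal level, so this is consistent with the paper's usage.
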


For example, putting \(f(a,a^+)=e^{\textstyle\beta\,a^+a}\) and taking into account~(\ref{formula1}) and~(\ref{formula2}) we obtain
\begin{equation}
\label{formula3}
\frac{\partial}{\partial a}\left(e^{\textstyle\beta\,a^+a}\right)=-[a^+,e^{\textstyle\beta\,a^+a}]=
e^{\textstyle\beta\,a^+a}\,a^+-a^+\,e^{\textstyle\beta\,a^+a}=(e^\beta-1)\,a^+e^{\textstyle\beta\,a^+a}\,,
\end{equation}
\begin{equation}
\label{formula4}
\frac{\partial}{\partial a^+}\left(e^{\textstyle\beta\,a^+a}\right)=[a,e^{\textstyle\beta\,a^+a}]=
a\,e^{\textstyle\beta\,a^+a}-e^{\textstyle\beta\,a^+a}\,a=(e^\beta-1)\,e^{\textstyle\beta\,a^+a}\,a\,.
\end{equation}
The following theorem first time appears probably in~\cite{20} without proof. We could not find any proofs of it. Therefore we give here our original proof.
\begin{theorem}
\label{factorization_U}
The unitary operator \(U\) can be presented in the form
$$
U=e^{\textstyle\lambda\,(a^2-a^{+2})}=e^{-\textstyle\gamma\,a^{+2}}\,e^{{\textstyle\beta\,(a^{+}a+\frac{1}{2})}}\,e^{\textstyle\gamma\,a^{2}}\,,
$$
where
$$
\displaystyle
2\gamma=\mbox{\rm th}(2\lambda)\,,\quad e^{\textstyle\beta}=\frac{1}{\mbox{\rm ch}(2\lambda)}\,.
$$
\end{theorem}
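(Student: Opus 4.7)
The approach is to recognize both sides as solutions of the same first-order operator ODE. Denote the right-hand side by
$$
V(\lambda) = e^{-\gamma(\lambda)\,a^{+2}}\;e^{\beta(\lambda)(a^+a+1/2)}\;e^{\gamma(\lambda)\,a^{2}}\,,
$$
with $\gamma(\lambda)$, $\beta(\lambda)$ as specified in the statement. The left-hand side satisfies $U'(\lambda)=(a^2-a^{+2})\,U(\lambda)$ with $U(0)=E$, and since $\gamma(0)=0$ and $e^{\beta(0)}=1/\mbox{ch}(0)=1$, also $V(0)=E$. So it suffices to verify $V'(\lambda)\,V(\lambda)^{-1}=a^2-a^{+2}$; uniqueness of the Cauchy problem on the invariant dense span of $\{e_n\}$ (on which all the exponential series converge strongly) will then force $V=U$.

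The heart of the proof is computing $V'V^{-1}$ explicitly. Writing $A=e^{-\gamma a^{+2}}$ and $B=e^{\beta(a^+a+1/2)}$, the Leibniz rule together with the fact that each of $a^{+2}$, $a^+a+1/2$, $a^2$ commutes with its own exponential yields
$$
V'V^{-1}=-\gamma'\,a^{+2}+\beta'\,A(a^+a+\tfrac{1}{2})A^{-1}+\gamma'\,AB\,a^2\,B^{-1}A^{-1}\,.
$$
Hadamard's lemma~(\ref{Hadamard_lemma}) combined with the commutator $[a^+a,a^{+2}]=2a^{+2}$ gives $A(a^+a)A^{-1}=a^+a+2\gamma\,a^{+2}$; formula~(\ref{formula2}) gives $Ba^2B^{-1}=e^{-2\beta}a^2$; and using $[a^{+2},a^2]=-4a^+a-2$ (together with the preceding conjugations) produces a Hadamard series for $Aa^2A^{-1}$ that terminates at second order, yielding $Aa^2A^{-1}=a^2+4\gamma\,a^+a+2\gamma+4\gamma^2a^{+2}$.

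Substituting and matching coefficients of $a^2$, $a^+a$, $a^{+2}$, and the scalar in the desired identity $V'V^{-1}=a^2-a^{+2}$ produces three independent requirements: $\gamma'e^{-2\beta}=1$ (from $a^2$), $\beta'=-4\gamma$ (from $a^+a$; the scalar term is automatic by proportionality), and $e^{2\beta}=1-4\gamma^2$ (from $a^{+2}$, after using the first two). The stated parametrization $2\gamma=\mbox{th}(2\lambda)$ gives $\gamma'=1-\mbox{th}^2(2\lambda)=1-4\gamma^2$, while $e^{\beta}=1/\mbox{ch}(2\lambda)$ gives simultaneously $e^{2\beta}=1-\mbox{th}^2(2\lambda)=1-4\gamma^2$ and $\beta'=-2\,\mbox{th}(2\lambda)=-4\gamma$, so all three conditions hold at once. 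The principal obstacle is the bookkeeping in the Hadamard expansion of $Aa^2A^{-1}$ — verifying that its series really terminates and keeping the order of the three noncommuting factors straight when applying the product rule — after which the algebraic verification reduces to elementary hyperbolic identities.
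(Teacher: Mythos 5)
Your proof is correct, but it takes a genuinely different route from the paper's. You fix the ansatz $V(\lambda)=e^{-\gamma a^{+2}}e^{\beta(a^+a+1/2)}e^{\gamma a^2}$ with the stated $\gamma(\lambda),\beta(\lambda)$ and verify that $V$ and $U$ solve the same Cauchy problem $W'(\lambda)=(a^2-a^{+2})W(\lambda)$, $W(0)=E$; the computation of $V'V^{-1}$ via the terminating Hadamard expansions ($A(a^+a)A^{-1}=a^+a+2\gamma a^{+2}$, $Aa^2A^{-1}=a^2+4\gamma a^+a+2\gamma+4\gamma^2a^{+2}$, $Ba^2B^{-1}=e^{-2\beta}a^2$) checks out, and the coefficient matching is right: $\gamma'e^{-2\beta}=1$, $\beta'=-4\gamma$, $\gamma'=1-4\gamma^2$ are all satisfied by $2\gamma=\mbox{th}(2\lambda)$, $e^{\beta}=1/\mbox{ch}(2\lambda)$, and the scalar term $\beta'/2+2\gamma\gamma'e^{-2\beta}=0$ vanishes precisely because of the $+\tfrac12$ in the middle exponent. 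The paper instead \emph{derives} the factorization: it isolates $e^{X}=e^{\gamma a^{+2}}\,U\,e^{-\gamma a^{2}}$, computes $\partial(e^X)/\partial a^+=[a,e^X]$ and $\partial(e^X)/\partial a=-[a^+,e^X]$ using Lemma~\ref{differentiation_function_of_a_and_a+} together with $U^+aU=\mbox{ch}(2\lambda)a-\mbox{sh}(2\lambda)a^+$, chooses $2\gamma=\mbox{th}(2\lambda)$ to kill the $a^+$ term, and matches against formulas~(\ref{formula3})--(\ref{formula4}) to conclude $X=\beta a^+a+\alpha$; the scalar $\alpha$ then requires a separate argument via the vacuum expectation $\delta(\lambda)=\langle0|U|0\rangle$ and the ODE $\delta'=-\mbox{th}(2\lambda)\,\delta$. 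Your method buys a single self-contained verification in which the normalization constant $e^{\beta/2}$ emerges for free from the scalar coefficient, at the price of having to guess the answer in advance; the paper's method discovers the form of $X$ without an ansatz but pays for it with the extra vacuum-expectation computation. Both arguments rest on the same finite commutator expansions and operate at the same level of formal rigor concerning the unbounded operators involved, which you at least acknowledge by restricting to the dense invariant span of the $e_n$.
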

\begin{proof}
Let's present the operator \(U\) in the form
$$
U=e^{\textstyle\lambda\,(a^2-a^{+2})}=e^{-\textstyle\gamma\,a^{+2}}\,e^{\textstyle X}\,e^{\textstyle\gamma\,a^{2}}\,,
$$
where \(X\) is unknown operator and \(\gamma\) is unknown function of \(\lambda\). Then
$$
e^{\textstyle X}=e^{\textstyle\gamma\,a^{+2}}\,U\,e^{-\textstyle\gamma\,a^{2}}\,.
$$
Using lemma~(\ref{differentiation_function_of_a_and_a+}) we have
$$
\frac{\partial (e^X)}{\partial a^+}=e^{\textstyle\gamma\,a^{+2}}\left(2\gamma\,a^+\,U+[a,U]\,\right)e^{\textstyle\gamma\,a^{2}}=e^{\textstyle\gamma\,a^{+2}}\left(2\gamma\,U\,U^+\,a^+\,U+a\,U-U\,a\,\right)\,e^{-\textstyle\gamma\,a^{2}}=
$$
$$
=e^{\textstyle\gamma\,a^{+2}}\left(2\gamma\,U\,U^+\,a^+\,U+U\,U^+\,a\,U-U\,a\,\right)\,e^{-\textstyle\gamma\,a^{2}}=
$$
$$
=e^{\textstyle\gamma\,a^{+2}}\,U\,\left(2\gamma\,U^+\,a^+\,U+U^+\,a\,U-a\,\right)\,e^{-\textstyle\gamma\,a^{2}}\,.
$$
Using~(\ref{transformation_a_by_U}) and adjoint relation to it we have  
$$
\frac{\partial (e^X)}{\partial a^+}=e^{\textstyle\gamma\,a^{+2}}\,U\,\left(2\gamma\,(\mbox{ch}(2\lambda)\,a^+-\mbox{sh}(2\lambda)\,a)+(\mbox{ch}(2\lambda)\,a-\mbox{sh}(2\lambda)\,a^+)-a\,\right)\,e^{-\textstyle\gamma\,a^{2}}=
$$
$$
=e^{\textstyle\gamma\,a^{+2}}\,U\,\left(\,(2\gamma\,\mbox{ch}(2\lambda)-\mbox{sh}(2\lambda))\,a^+   +(-2\gamma\,\mbox{sh}(2\lambda)+\mbox{ch}(2\lambda)-1)\,a\,\right)\,e^{-\textstyle\gamma\,a^{2}}\,.
$$
If we put
$$
2\gamma=\mbox{th}(2\lambda)\,,
$$
then the previous expression takes the form
$$
\frac{\partial (e^X)}{\partial a^+}=\left(\frac{1}{\mbox{ch}(2\lambda)}-1\right)\,e^X\,a\,.
$$
By the same way one can deduce
$$
\frac{\partial (e^X)}{\partial a}=\left(\frac{1}{\mbox{ch}(2\lambda)}-1\right)\,a^+\,e^X\,.
$$
Compare these two expressions with (\ref{formula3}),(\ref{formula4}) one conclude that
$$
X=\beta\,a^+a+\alpha\,,\quad e^{\textstyle\beta}=\frac{1}{\mbox{ch}(2\lambda)}\,,
$$
where \(\alpha=\alpha(\lambda)\) is unknown function of \(\lambda\). It remains to find this function
\(\alpha(\lambda)\). We have found that
$$
U=e^{-\textstyle\gamma\,a^{+2}}\,e^{{\textstyle\beta\,a^{+}a}\,\,+\textstyle{\alpha}}\,e^{\textstyle\gamma\,a^{2}}\,.
$$
If \(|0\rangle\) is a normalized vector corresponding to the vacuum state in Dirac's notations (\(a\,|0\rangle=0\), \(\langle0|\,a^+=0\), \(\langle0|0\rangle=1\)), then from this formula it follows that
$$
\langle0|\,U\,|0\rangle=e^{\textstyle\alpha}\equiv\delta(\lambda)
$$
or
$$
\delta(\lambda)=\langle0|\,e^{\textstyle\lambda\,(a^2-a^{+2})}\,|0\rangle\,.
$$
One has next
$$
\delta'(\lambda)=\langle0|(a^2-a^{+2})\,e^{\textstyle\lambda\,(a^2-a^{+2})}\,|0\rangle=
\langle0|\,a^2\,U\,|0\rangle\,,
$$
$$
\delta'(\lambda)=\langle0|\,e^{\textstyle\lambda\,(a^2-a^{+2})}\,(a^2-a^{+2})\,|0\rangle=
-\langle0|\,U\,a^{+2}\,|0\rangle
$$
and hence
$$
2\,\delta'(\lambda)=\langle0|\,a^2\,U-U\,a^{+2}\,|0\rangle=\langle0|\,U\,U^+\,a^2\,U-U\,a^{+2}\,U^+\,U\,|0\rangle=
$$
$$
=\langle0|\,a^2\,U-U\,a^{+2}\,|0\rangle=\langle0|\,U\,(\mbox{ch}(2\lambda)\,a-\mbox{sh}(2\lambda)\,a^+)^2-(\mbox{ch}(2\lambda)\,a^++\mbox{sh}(2\lambda)\,a)^2\,U\,|0\rangle=
$$
$$
=\langle0|\,U\,(\mbox{sh}^2(2\lambda)\,a^{+2}-\mbox{sh}(2\lambda)\,\mbox{ch}(2\lambda)\,a\,a^+)-(\mbox{sh}^2(2\lambda)\,a^2+\mbox{sh}(2\lambda)\,\mbox{ch}(2\lambda)\,a\,a^+)\,U\,|0\rangle=
$$
$$
=\mbox{sh}^2(2\lambda)\,\langle0|\,U\,a^{+2}-a^2\,U\,|0\rangle-\mbox{sh}(2\lambda)\,\mbox{ch}(2\lambda)\,\langle0|\,U\,a\,a^{+}+a\,a^+\,U\,|0\rangle=
$$
$$
=-2\,\mbox{sh}^2(2\lambda)\,\delta'(\lambda)-\mbox{sh}(2\lambda)\,\mbox{ch}(2\lambda)\,\langle0|\,U+U\,|0\rangle=-2\,\mbox{sh}^2(2\lambda)\,\delta'(\lambda)-2\,\mbox{sh}(2\lambda)\,\mbox{ch}(2\lambda)\,\delta(\lambda)\,.
$$
Thus we obtain
$$
2\,\delta'(\lambda)=-2\,\mbox{sh}^2(2\lambda)\,\delta'(\lambda)-2\,\mbox{sh}(2\lambda)\,\mbox{ch}(2\lambda)\,\delta(\lambda)
$$
or
$$
\delta'(\lambda)=-\mbox{th}(2\lambda)\,\delta(\lambda)\,.
$$
Solving this differential equation with initial condition \(\delta(0)=1\) we obtain finally
$$
\delta(\lambda)=\frac{1}{\sqrt{\mbox{ch}(2\lambda)}}=e^{{\textstyle\beta/2}}\,\quad(\alpha=\beta/2)\,.
$$
\end{proof}

Using the factorization of the operator \(U\) which was established in the theorem~(\ref{factorization_U}), one can find the matrix representation \(U_{m,n}=(Ue_n,e_m)\) of this operator. Taking into account that \(a^+e_n=\sqrt{n+1}\,e_{n+1}\) and \(a\,e_n=\sqrt{n}\,e_{n-1}\) one has
$$
Ue_n=e^{-\textstyle\gamma\,a^{+2}}\,e^{{\textstyle\beta\,(a^{+}a+\frac{1}{2})}}\,e^{\textstyle\gamma\,a^{2}}\,e_n=e^{-\textstyle\gamma\,a^{+2}}\,e^{{\textstyle\beta\,(a^{+}a+\frac{1}{2})}}\,\sum\limits_{k=0}^\infty\frac{(\gamma\,a^2)^k}{k!}\,e_n=
$$
$$
=e^{-\textstyle\gamma\,a^{+2}}\,\sum\limits_{k=0}^{\textstyle[\,\frac{n}{2}\,]}\frac{\gamma^k}{k!}\,
e^{{\textstyle\beta\,(n-2k+\frac{1}{2})}}\sqrt{n(n-1)\ldots(n-2k+1)}\,e_{n-2k}=
$$
$$
=e^{{\textstyle\beta\,(n+\frac{1}{2})}}\,\sum\limits_{j=0}^\infty\frac{(-\gamma)^j}{j!}\sum\limits_{k=0}^{\textstyle[\,\frac{n}{2}\,]}\frac{\gamma^k}{k!}\,
e^{\textstyle-2\beta k}\sqrt{n(n-1)\ldots(n-2k+1)}\,\cdot
$$
$$
\cdot\sqrt{(n-2k+1)(n-2k+2)\ldots(n-2k+2j)}\,e_{n-2k+2j}\,.
$$
It follows that
$$
U_{m,n}=0\,,\quad m\not\equiv n\:\:(\mbox{mod}\:2)\,.
$$
and if \(m\equiv n\:\:(\mbox{mod}\:2)\) and \(m\ge n\) then (putting \(n-2k+2j=m\))
$$
U_{m,n}=e^{{\textstyle\beta\,(n+\frac{1}{2})}}\,(-\gamma)^{\textstyle\frac{m-n}{2}}\,\sum\limits_{k=0}^{\textstyle[\,\frac{n}{2}\,]}\frac{(-\gamma^2)^k\,e^{-2\beta k}}{(\frac{m-n}{2}+k)!\,k!}\,
\sqrt{n(n-1)\ldots(n-2k+1)}\,\cdot
$$
$$
\cdot\sqrt{(n-2k+1)(n-2k+2)\ldots m}=
$$
\begin{equation}
\label{U_mn_matrix}
=e^{{\beta\,(n+\frac{1}{2})}}\,(-\gamma)^{\textstyle\frac{m-n}{2}}\,\sqrt{n!\,m!}\,\sum\limits_{k=0}^{\textstyle[\,\frac{n}{2}\,]}(-1)^k\frac{\left(\gamma\,e^{-\beta}\right)^{2k}}{k!\,(n-2k)!\,(\frac{m-n}{2}+k)!}\,.
\end{equation}
If \(m\equiv n\:\:(\mbox{mod}\:2)\) and \(m\le n\) then
$$
U_{m,n}=e^{{\beta\,(n+\frac{1}{2})}}\,(-\gamma)^{\textstyle\frac{m-n}{2}}\,\sqrt{n!\,m!}\,\sum\limits_{k=\textstyle\frac{n-m}{2}}^{\textstyle[\,\frac{n}{2}\,]}(-1)^k\frac{\left(\gamma\,e^{-\beta}\right)^{2k}}{k!\,(n-2k)!\,(\frac{m-n}{2}+k)!}=
$$
$$
=e^{{\beta\,(m+\frac{1}{2})}}\,(-\gamma)^{\textstyle\frac{n-m}{2}}\,\sqrt{n!\,m!}\:(-1)^{\textstyle\frac{n-m}{2}}\sum\limits_{k=0}^{\textstyle[\,\frac{m}{2}\,]}(-1)^k\frac{\left(\gamma\,e^{-\beta}\right)^{2k}}{k!\,(m-2k)!\,(\frac{n-m}{2}+k)!}=(-1)^{\textstyle\frac{n-m}{2}}\,U_{n,m}\,.
$$
Let us apply the transformation \(U\) to the operator~(\ref{main_operator_two_Rabi_maodel}) :
$$
\tilde{H}=\tilde{H_0}+\tilde{V}\,,\quad \tilde{H}_0=U^+\,H_0\,U\,,\quad \tilde{V}=U^+\,V\,U\,.
$$
Here \(\tilde{H}_0\) is diagonal according~(\ref{diagonal_H_0}): \(\tilde{H}_0=\mbox{diag}\{E_n^{(0)}\}\). The matrix of the operator \(V\) is diagonal. Consider for example the matrix of the operator \(\hat{\bf V}_1\). Then \(V=\mbox{diag}\{V_{kk}\}\) and
$$
V_{kk}=\frac{\Delta}{2}\,(-1)^l\,,\quad k=2l\:\:\mbox{or}\:\:k=2l+1\,,\quad l\in\mathbb{Z}_+\,.
$$

\begin{lemma}
\label{U_V_permutation}
The operators \(U\) and \(V\) satisfy the condition
$$
U^+\,V=V\,U\quad\mbox{or}\quad U\,V\,U=V\,.
$$
\end{lemma}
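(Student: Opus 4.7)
The plan is to exhibit an anticommutation relation between $V$ and $a^2-a^{+2}$ and then pass to the exponential. Concretely, since the diagonal of $V$ has period four of the form $(+,+,-,-,+,+,-,-,\ldots)$ up to the factor $\Delta/2$, one has $V e_k = \frac{\Delta}{2}(-1)^{\lfloor k/2\rfloor}e_k$. The key arithmetic observation is that $\lfloor(k-2)/2\rfloor=\lfloor k/2\rfloor-1$, so the sign flips when the index is shifted by $2$.

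First I would verify directly on the basis the anticommutation $Va^2=-a^2V$. Using $a^2e_k=\sqrt{k(k-1)}\,e_{k-2}$,
$$
Va^2 e_k=\frac{\Delta}{2}(-1)^{\lfloor(k-2)/2\rfloor}\sqrt{k(k-1)}\,e_{k-2}=-\frac{\Delta}{2}(-1)^{\lfloor k/2\rfloor}\sqrt{k(k-1)}\,e_{k-2}=-a^2 Ve_k,
$$
and the identity $Va^{+2}=-a^{+2}V$ follows in exactly the same way (or by taking adjoints, using $V^*=V$). Summing, $V$ anticommutes with $A\equiv\lambda(a^2-a^{+2})$, i.e.\ $VA=-AV$.

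Next I would propagate the anticommutation to every power of $A$: by induction $VA^n=(-A)^nV=(-1)^nA^nV$, because each time one moves $V$ past $A$ a sign is produced. Summing the exponential series (convergent when applied to vectors in the invariant dense domain generated by the $e_n$, which is enough to identify the bounded operators $VU$ and $U^+V$),
$$
VU=V\,e^A=\sum_{n=0}^\infty\frac{VA^n}{n!}=\sum_{n=0}^\infty\frac{(-A)^n}{n!}\,V=e^{-A}\,V=U^+V.
$$
Multiplying both sides on the right by $U$ (and using $U U^+=U^+U=E$) yields the equivalent form $UVU=V$.

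There is no serious obstacle; the whole content is the parity observation $(-1)^{\lfloor(k\pm 2)/2\rfloor}=-(-1)^{\lfloor k/2\rfloor}$. The only point worth being careful about is the manipulation of the infinite series, but since $U$ and $U^+$ are already known to be bounded and the identity $VA^n=(-A)^nV$ is an operator identity on the invariant linear span of $\{e_n\}$, both $VU$ and $U^+V$ are determined by their action on this total set, so the conclusion $VU=U^+V$ extends by continuity to all of $H$. The argument works verbatim for the alternative sign choice $V=-\hat{\bf V}_1$ as well.
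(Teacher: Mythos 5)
Your argument is correct in substance but takes a genuinely different route from the paper. The paper proves the lemma by brute force on matrix elements: it uses the explicit formula for \(U_{m,n}\) derived from the factorization theorem, in particular the symmetry relation \(U_{m,n}=(-1)^{(n-m)/2}U_{n,m}\) for \(m\equiv n\ (\mathrm{mod}\ 2)\), and checks \((U^+V)_{n,m}=(VU)_{n,m}\) case by case according to the parities of \(n\) and \(m\). You instead isolate the algebraic heart of the matter -- the anticommutation \(VA=-AV\) with \(A=\lambda(a^2-a^{+2})\), which is exactly the parity observation \((-1)^{\lfloor(k\pm2)/2\rfloor}=-(-1)^{\lfloor k/2\rfloor}\) -- and exponentiate. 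This is cleaner and more conceptual, needs none of the explicit \(U_{m,n}\) machinery, and makes transparent why the sign pattern of period four (rather than period two) is the right one for a quadratic generator. The one point you wave at but do not fully close is the passage from \(VA^ne_k=(-A)^nVe_k\) to \(Ve^Ae_k=e^{-A}Ve_k\): the generator \(A\) is unbounded, and the vectors \(e_k\) are analytic vectors for \(a^2-a^{+2}\) only with a finite radius of convergence (the norms \(\|A^ne_k\|/n!\) behave roughly like \((2|\lambda|)^n\)), so the termwise summation of the exponential series is justified only for \(|\lambda|\) small; since here \(\lambda\) is fixed by \(\mathrm{th}(4\lambda)=2g\) and becomes large as \(g\to1/2\), this matters. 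The gap is easily repaired: either establish \(VU(\lambda)=U(-\lambda)V\) for small \(\lambda\) by your series argument and then extend to all \(\lambda\) via the group property \(U(\lambda)=U(\lambda/N)^N\), or observe that \(\lambda\mapsto VU(\lambda)V^{-1}\) is a strongly continuous one-parameter orthogonal group whose generator agrees with \(-(a^2-a^{+2})\) on the dense invariant span of the \(e_n\), where the generator is essentially skew-adjoint, so \(VU(\lambda)V^{-1}=U(-\lambda)=U^+\) by uniqueness. With either patch the proof is complete, and the final step \(UVU=V\) from \(VU=U^+V\) is immediate as you say.
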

\begin{proof}
Using the matrix representation of \(U\) and \(V\) one has for \(m\equiv n\:\:(\mbox{mod}\:2)\)
$$
(U^+\,V)_{n,m}=\sum_k U^+_{n,k}V_{k,m}=U^+_{n,m}V_{m,m}=U_{m,n}V_{m,m}=(-1)^{\textstyle\frac{n-m}{2}}\,U_{n,m}\,V_{m,m}\,.
$$
If \(n=2p\), \(m=2s\) then
$$
(U^+\,V)_{n,m}=(-1)^{p-s}\,U_{n,m}\,(-1)^s\,\frac{\Delta}{2}=U_{n,m}\,(-1)^p\,\frac{\Delta}{2}=
V_{n,n}\,U_{n,m}=(V\,U)_{n,m}\,.
$$
If \(n=2p+1\), \(m=2s+1\) we obtain the same equality. If \(m\not\equiv n\:\:(\mbox{mod}\:2)\) we have also
$$
(U^+\,V)_{n,m}=U_{m,n}V_{m,m}=0=V_{n,n}\,U_{n,m}=(V\,U)_{n,m}\,.
$$
\end{proof}
Note that the same situation as in lemma~(\ref{U_V_permutation}) takes place also in one-photon quantum Rabi model and first found in~\cite{11}.

Using this lemma we find
$$
\tilde{V}=U^+\,V\,U=V\,U\,U=V\,U(\lambda)\,U(\lambda)=V\,U(2\lambda)\,.
$$
Here we used a group property \(U(\lambda_1)\,U(\lambda_2)=U(\lambda_1+\lambda_2)\) of the orthogonal operator \(U\). Thus the matrix of \(\tilde{V}\) has the form
$$
\tilde{V}_{m,n}=V_{m,m}\,U_{m,n}(2\lambda)=(-1)^l\,\frac{\Delta}{2}\,U_{m,n}(2\lambda)\,,\quad
l=\left\{\begin{array}{r}
m/2\,,\quad m-\mbox{even}\\
(m-1)/2\,,\quad m-\mbox{odd}
\end{array} \right.\,.
$$
From the theorem~(\ref{factorization_U}) it follows that
$$
\gamma(\lambda)=\frac{\mbox{\rm th}(2\lambda)}{2}\,,\quad e^{\textstyle\beta(\lambda)}=\frac{1}{\mbox{\rm ch}(2\lambda)}\,.
$$
Hence using~(\ref{equation_for_lambda}) and (\ref{diagonal_H_0}) we find
$$
\gamma(2\lambda)=\frac{\mbox{\rm th}(4\lambda)}{2}=g\,,\quad e^{\textstyle\beta(2\lambda)}=\frac{1}{\mbox{\rm ch}(4\lambda)}=\sqrt{1-4g^2}=\omega\,.
$$
Thus from (\ref{U_mn_matrix}) we obtain
$$
U_{m,n}(2\lambda)=\omega^{{(n+\frac{1}{2})}}\,(-g)^{\textstyle\frac{m-n}{2}}\,\sqrt{n!\,m!}\,\sum\limits_{k=0}^{\textstyle[\,\frac{n}{2}\,]}(-1)^k\frac{\left(g/\omega\right)^{2k}}{k!\,(n-2k)!\,(\frac{m-n}{2}+k)!}=
$$
$$
=(-1)^{\textstyle\frac{m-n}{2}}\,\sqrt{\omega}\,g^{\textstyle\frac{m+n}{2}}\,\sqrt{n!\,m!}\,\sum\limits_{k=0}^{\textstyle[\,\frac{n}{2}\,]}(-1)^k\frac{\left(\omega/g\right)^{n-2k}}{k!\,(n-2k)!\,(\frac{m-n}{2}+k)!}\quad
(m\equiv n\:\:(\mbox{mod}\:2)\,,\:m\ge n)\,.
$$
Introduce the polynomials \(P_n^{(s)}(x)\) of degree \(n\) by the formula
\begin{equation}
\label{polynomials_P_n}
P_n^{(s)}(x)\equiv\sum\limits_{k=0}^{\textstyle[\,\frac{n}{2}\,]}(-1)^k\frac{n!\,(2x)^{n-2k}}{k!\,(n-2k)!\,(s+k)!}
\end{equation}
Through these polynomials the matrices \(U_{m,n}(2\lambda)\) and \(\tilde{V}_{m,n}\) can be presented as follows
$$
U_{m,n}(2\lambda)=(-1)^{\textstyle\frac{m-n}{2}}\,\sqrt{\omega}\,g^{\textstyle\frac{m+n}{2}}\,\sqrt{\frac{m!}{n!}}\,P_n^{(s)}\left(\frac{\omega}{2g}\right)\,,\quad s=\frac{m-n}{2}\,,
$$
$$
\tilde{V}_{m,n}=(-1)^{l+\textstyle\frac{m-n}{2}}\,\frac{\Delta}{2}\,\sqrt{\omega}\,g^{\textstyle\frac{m+n}{2}}\,\sqrt{\frac{m!}{n!}}\,P_n^{(s)}\left(\frac{\omega}{2g}\right)\,,\quad
l=\left\{\begin{array}{r}
m/2\,,\quad m-\mbox{even}\\
(m-1)/2\,,\quad m-\mbox{odd}
\end{array} \right.
$$
or
\begin{equation}
\label{matrix_of_V_tilde}
\tilde{V}_{m,n}=(-1)^{l}\,\frac{\Delta}{2}\,\sqrt{\omega}\,g^{\textstyle\frac{m+n}{2}}\,\sqrt{\frac{m!}{n!}}\,P_n^{(s)}\left(\frac{\omega}{2g}\right)\,,\quad
l=\left\{\begin{array}{r}
n/2\,,\quad n-\mbox{even}\\
(n-1)/2\,,\quad n-\mbox{odd}
\end{array} \right.\,.
\end{equation}
These formulas are valid if \(m\equiv n\:\:(\mbox{mod}\:2)\) and \(m\ge n\).

If \(m\equiv n\:\:(\mbox{mod}\:2)\) and \(m\le n\) then
$$
U_{m,n}(2\lambda)=(-1)^{\textstyle\frac{n-m}{2}}\,U_{n,m}(2\lambda)
$$
and
$$
\tilde{V}_{m,n}=(-1)^l\,\frac{\Delta}{2}\,\sqrt{\omega}\,g^{\textstyle\frac{m+n}{2}}\,\sqrt{\frac{n!}{m!}}\,P_m^{(s)}\left(\frac{\omega}{2g}\right)\,,\:\: s=\frac{n-m}{2}\,,\:\:
l=\left\{\begin{array}{r}
m/2\,,\quad m-\mbox{even}\\
(m-1)/2\,,\quad m-\mbox{odd}
\end{array} \right.
$$
that is
$$
\tilde{V}_{m,n}=\tilde{V}_{n,m}
$$
as it should be as the matrix \(V\) is symmetric and the transformation \(U\) is orthogonal. If \(m\not\equiv n\:\:(\mbox{mod}\:2)\) then
$$
\tilde{V}_{m,n}=U_{m,n}=0\,.
$$
It follows that the vectors \(e_n\) with even and odd indexes form invariant subspaces for the operators \(H\) and \(\tilde{H}\).

\section{Asymptotics of polynomials \(P_n^{(s)}(x)\)}

The polynomials \(P_n^{(s)}(x)\) are defined by formula~(\ref{polynomials_P_n}):
$$
P_n^{(s)}(x)=\sum\limits_{k=0}^{\textstyle[\,\frac{n}{2}\,]}(-1)^k\frac{n!\,(2x)^{n-2k}}{k!\,(n-2k)!\,(s+k)!}\,,\quad s=\frac{m-n}{2}\ge 0\,.
$$
We are interesting in the behaviour of this polynomials for large \(n\) and \(m\). We will follow the Liouville-Steklov method~\cite{14,15}.

Consider the hypergeometric function \(F(a,b;c;x)\)
$$
F(a,b;c;x)=1+\sum\limits_{k=1}^\infty\frac{a(a+1)\ldots(a+k-1)\,b(b+1)\ldots(b+k-1)}{c(c+1)\ldots(c+k-1)}\,\frac{x^k}{k!}\,.
$$
This function satisfies the differential equation
$$
x(1-x)\,\frac{d^2F}{dx^2}+[c-(a+b+1)x\,]\,\frac{dF}{dx}-ab\,F=0\,.
$$

I. Consider first the case of even \(n\) and \(m\). Changing \(n\to 2n\), \(m\to 2m\), consider the polynomials
$$
P_{2n}^{(s)}(x)=\sum\limits_{k=0}^{n}(-1)^k\frac{(2n)!\,(2x)^{2n-2k}}{k!\,(2n-2k)!\,(s+k)!}\,,\quad s=m-n\ge 0\,.
$$
Consider the hypergeometric function \(F(-n,-m;1/2;-x^2)\). One has
$$
F(-n,-m;1/2;-x^2)=\sum_{k=0}^n(-1)^k\frac{n!\,m!}{(n-k)!\,(m-k)!}\,\frac{(2x)^{2k}}{(2k)!}=
$$
$$
=(-1)^n\,\frac{n!\,m!}{(2n)!}\sum_{k=0}^n(-1)^k\frac{(2n)!\,(2x)^{2n-2k}}{k!\,(m-n+k)!\,(2n-2k)!}=
(-1)^n\,\frac{n!\,m!}{(2n)!}\,P_{2n}^{(m-n)}(x)\,.
$$
It follows that
$$
P_{2n}^{(m-n)}(x)=(-1)^n\,\frac{(2n)!}{n!\,m!}\,F(-n,-m;1/2;-x^2)\,.
$$
Denoting \(z=-x^2\), one has
$$
z(1-z)\,\frac{d^2F}{dz^2}+[1/2-(1-n-m)z\,]\,\frac{dF}{dz}-n\,m\,F=0\,,
$$
whence
$$
(1+x^2)\,\frac{d^2F}{dx^2}+(1-2n-2m)\,x\,\frac{dF}{dx}+4\,n\,m\,F=0\,.
$$
Making the substitution \(x=\mbox{sh}\,t\) we obtain
$$
F''_{tt}-(2n+2m)\,\mbox{th}\,t\,F'_t+4\,m\,n\,F=0\,.
$$
After standard substitution \(F(-n,-m;1/2;-\,\mbox{sh}^2\,t)\equiv F(t)=s(t)\,u(t)\), where 
$$
s(t)=\exp\left\{(n+m)\int\mbox{th}\,t\,dt\right\}=\left(\mbox{ch}\,t\right)^{n+m}\,,
$$
we come to the following equation for the function \(u(t)\)
$$
u''(t)+\left[4nm-(n+m)^2\,\mbox{th}^2t+\frac{n+m}{\mbox{ch}^2t}\right]u(t)=0
$$
or
$$
u''(t)+Q(t)\,u(t)=0\,,
$$
$$
Q(t)=4nm-(n+m)^2\,\mbox{th}^2t+\frac{n+m}{\mbox{ch}^2t}=
\frac{(n+m)^2+n+m}{\mbox{ch}^2t}-(m-n)^2\,.
$$
If \(m,n\to\infty\) so that \(\dfrac{m-n}{m+n}\to0\), then \(Q(t)> 0\) and \(Q(t)\to+\infty\) for any fixed \(t\). So to find the asymptotics of \(u(t)\) for large \(m\) and \(n\) we can use the WKB method ideas. But it is better to use the Liouville transformation~\cite{16}:
\begin{equation}
\label{Liouville_transformation}
y=\int\limits_0^t\sqrt{Q(\tau)}\,d\tau\,,\quad u=Q^{-1/4}\,z\,.
\end{equation}
This transformation reduces differential equation \(u''+Q(t)\,u=0\) to the following equation for the function \(z(y)\):
\begin{equation}
\label{diff_equation_for_z(y)}
z''+(1+q(y))\,z=0\,,\quad q=Q^{-3/4}\,\frac{d^2}{dt^2}\left(Q^{-1/4}\right)=-\frac{Q''_{tt}}{4\,Q^2}+\frac{5\,Q'^2_t}{16\,Q^3}\,.
\end{equation}
From the formula for \(q(y)\) it follows that \(q(y)\to 0\) as \(n,m\to\infty\). So we can expect that \(z(y)\) tends to the solution of oscillator equation \(z''+z=0\). To prove it consider the differential equation for \(z(y)\) as inhomogeneous one
$$
z''(y)+z=f(u)\,,\quad f(u)=-q(y)\,z\,.
$$ 
Using the known formulas we obtain
$$
z(y)=z'(0)\sin y+z(0)\cos y+\int\limits_0^y f(v)\,\sin(y-v)\,dv
$$
The initial conditions \(z(0)\), \(z'(0)\) can be found from the initial conditions for the function \(F(x)\equiv F(-n,-m;1/2;-\,x^2)\): \(F(0)=1\), \(F'(0)=0\). The same conditions we obtain and for the function \(f(t)\equiv F(\mbox{sh}\,t)=s(t)\,u(t)\): \(f(0)=1\), \(f'(0)=0\). Next the formula
$$
f(t)=s(t)\,u(t)=\left(\mbox{ch}\,t\right)^{n+m}\,u(t)
$$
yields \(u(0)=1\), \(u'(0)=0\). Now the Liouville transformation gives \(y(0)=0\) and
$$
z(0)=\left(Q(0)\right)^{1/4}\,u(0)=\sqrt[4]{4nm+n+m}\,.
$$
From
$$
u'(t)=-\frac{Q'(t)}{4Q^{5/4}(t)}\,z(y)+Q^{-1/4}(t)\,z'(y)\,y'(t)=
-\frac{Q'(t)}{4Q^{5/4}(t)}\,z(y)+Q^{1/4}(t)\,z'(y)
$$
and \(Q'(0)=0\) it follows that \(z'(0)=0\).

Thus we obtain
$$
z(y)=\sqrt[4]{4nm+n+m}\,\cos y-\int\limits_0^y q(v)\,z(v)\,\sin(y-v)\,dv\,.
$$
To estimate the integral in this expression one can construct successive iterations of this integral equation (successive approximations). Putting \(k=\sqrt[4]{4nm+n+m}\) and substituting the integral equation in itself one has
$$
z(y)=k\cos y-k\int\limits_0^y q(v)\,\cos v\,\sin(y-v)\,dv+\int\limits_0^y q(v)\,\sin(y-v)\int\limits_0^v q(v_1)\,z(v_1)\,\sin(v-v_1)\,dv_1\,dv\,.
$$
Continuing this process we obtain
$$
z(y)=k\cos y+k\sum\limits_{n=1}^\infty (-1)^n\,z_n(y)\,,
$$
where
$$
z_n(y)=\int\limits_0^y q(y_1)\sin(y-y_1)\,dy_1\ldots
$$
$$
\int\limits_0^{y_{n-2}} q(y_{n-1})\sin(y_{n-2}-y_{n-1})\,dy_{n-1}\int\limits_0^{y_{n-1}} q(y_n)\cos y_n\sin(y_{n-1}-y_n)\,dy_n\,.
$$
Standard logic~\cite{17} leads to the estimate
$$
|z_n(y)|\le\int\limits_0^y|q(y_1)|\,dy_1\ldots\int\limits_0^{y_{n-2}} |q(y_{n-1})|\,dy_{n-1}\int\limits_0^{y_{n-1}} |q(y_n)|\,dy_n=\frac{1}{n!}\left[\int\limits_0^y|q(y)|\,dy\right]^n\,.
$$
Hence
\begin{equation}
\label{estimation_z(y)}
|z(y)-k\cos y|\le k\left(\exp\left\{\int\limits_0^y|q(y)|\,dy\right\}-1\right)\,.
\end{equation}
Denote \(\lambda=\sqrt{(n+m)^2+n+m}\). Then
$$
Q(t)=\frac{\lambda^2}{\mbox{ch}^2t}-s^2
$$
and
$$
y=\int\limits_0^t\sqrt{Q(\tau)}\,d\tau=\int\limits_0^t\sqrt{\frac{\lambda^2}{\mbox{ch}^2\tau}-s^2}\,d\tau\sim\lambda\,\arctan(\mbox{sh}\,t)\,,\quad n\to\infty\,.
$$
In particular, if \(n=m\) then \(s=0\) and we have exactly \(y=\lambda\,\arctan(\mbox{sh}\,t)\). As \(t\) is fixed and \(y\) grows with \(n\) and \(m\) we should change variable in the integral in~(\ref{estimation_z(y)}):
$$
\int\limits_0^y|q(y)|\,dy=\int\limits_0^t|q(y(\tau))|\,y'(\tau)\,d\tau=\int\limits_0^t|q(y(\tau))|\,\sqrt{Q(\tau)}\,d\tau=\int\limits_0^t\left|-\frac{Q''_{tt}(\tau)}{4\,Q^2(\tau)}+\frac{5\,Q'^2_t(\tau)}{16\,Q^3(\tau)}\right|\,\sqrt{Q(\tau)}\,d\tau\,.
$$
From this formula and above formula for \(Q(t)\) it follows that if \(\dfrac{s}{\lambda}\to 0\) as \(n\to\infty\) then 
$$
\int\limits_0^y|q(y)|\,dy=O\left(\frac{1}{\lambda}\right)\,,\quad n\to\infty\,.
$$
(As \(m\ge n\), \(m\to\infty\) too.) From estimation~(\ref{estimation_z(y)}) it follows that
$$
z(y)=k\left(\cos y+O\left(\frac{1}{\lambda}\right)\right)\,.
$$
Coming back to the function \(F(t)=s(t)\,u(t)\) we obtain
$$
F(t)=\left(\mbox{ch}\,t\right)^{n+m}\,Q^{-1/4}(t)\,z(y(t))
=\left(\mbox{ch}\,t\right)^{n+m}\,\left(\frac{\lambda^2}{\mbox{ch}^2t}-s^2\right)^{-1/4}\,k\left(\cos y+O\left(\frac{1}{\lambda}\right)\right)=
$$
$$
=\left(\mbox{ch}\,t\right)^{n+m}\,\left(\frac{1}{\mbox{ch}^2t}-\frac{s^2}{\lambda^2}\right)^{-1/4}\,\frac{k}{\sqrt{\lambda}}\left(\cos y+O\left(\frac{1}{\lambda}\right)\right)\,,\quad n\to\infty\,,
$$
where
$$
y=\lambda\int\limits_0^t\sqrt{\frac{1}{\mbox{ch}^2\tau}-\frac{s^2}{\lambda^2}}\,d\tau\,.
$$ 
Because
$$
\frac{k}{\sqrt{\lambda}}=\frac{\sqrt[4]{4nm+n+m}}{\sqrt{\lambda}}=\frac{(\lambda^2-s^2)^{1/4}}{\sqrt{\lambda}}=\left(1-\frac{s^2}{\lambda^2}\right)^{1/4} \quad\mbox{and}\quad O\left(\frac{1}{\lambda}\right)=O\left(\frac{1}{n+m}\right)
$$
we find finally
$$
F(t)=\left(\mbox{ch}\,t\right)^{n+m}\,\left(\frac{1}{\mbox{ch}^2t}-\frac{s^2}{\lambda^2}\right)^{-1/4}\left(1-\frac{s^2}{\lambda^2}\right)^{1/4}\left(\cos y+O\left(\frac{1}{n+m}\right)\right)\,.
$$
Coming back to the variable \(x=\mbox{sh}\,t\) we obtain the following result:
\begin{theorem}
\label{asymptot_Polynom_even}
Let \(\lambda=\sqrt{(n+m)^2+n+m}\) and \(s=m-n\ge 0\). If \(\dfrac{s}{\lambda}\to0\) as \(n\to\infty\) then
$$
P_{2n}^{(m-n)}(x)=(-1)^n\,\frac{(2n)!}{n!\,m!}\,(1+x^2)^{\textstyle\frac{n+m}{2}}\left(\frac{1}{1+x^2}-\frac{s^2}{\lambda^2}\right)^{-1/4}\left(1-\frac{s^2}{\lambda^2}\right)^{1/4}\left(\cos y+O\left(\frac{1}{n+m}\right)\right)
$$
as \(n\to\infty\), where 
$$
y=\lambda\int\limits_0^{\mbox{\small arsh\,x}}\sqrt{\frac{1}{\mbox{ch}^2\tau}-\frac{s^2}{\lambda^2}}\,d\tau\,.
$$
The estimation of the reminders is uniform on any bounded set of \(x\).
\end{theorem}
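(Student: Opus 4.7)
The plan is to consolidate the derivation already carried out in the preceding discussion, turning the heuristic WKB analysis into a rigorous statement with an explicit error bound. First, I would recall the identification $P_{2n}^{(m-n)}(x) = (-1)^n \frac{(2n)!}{n!\,m!}\,F(-n,-m;1/2;-x^2)$ and the fact that $F$ satisfies the hypergeometric ODE. After the change of variable $x=\mbox{sh}\,t$, this reduces to $F''_{tt}-2(n+m)\,\mbox{th}\,t\,F'_t+4nm\,F=0$. The gauge transformation $F=(\mbox{ch}\,t)^{n+m}u(t)$ eliminates the first-derivative term and yields $u''+Q(t)\,u=0$ with $Q(t)=\lambda^2/\mbox{ch}^2t-s^2$, where $\lambda^2=(n+m)^2+(n+m)$ and $s=m-n$.

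Next, I would apply the Liouville transformation $y=\int_0^t\sqrt{Q(\tau)}\,d\tau$, $u=Q^{-1/4}z$, reducing the equation to the near-oscillator form $z''+(1+q(y))z=0$ with $q=-Q''_{tt}/(4Q^2)+5Q'^2_t/(16Q^3)$. The initial data $F(0)=1$, $F'_t(0)=0$ propagate to $u(0)=1$, $u'(0)=0$, and then to $z(0)=k\equiv\sqrt[4]{4nm+n+m}$ and $z'(0)=0$, the latter because $Q'_t(0)=0$. Recasting the equation for $z$ as an inhomogeneous oscillator with forcing $-q(y)z$ and using variation of parameters gives the Volterra equation $z(y)=k\cos y-\int_0^y q(v)\,z(v)\,\sin(y-v)\,dv$. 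Successive iteration (Picard iteration) then produces a majorant series bounded termwise by $k\cdot(\int_0^y|q|)^n/n!$, leading to the Gronwall-type estimate $|z(y)-k\cos y|\le k\bigl(\exp\int_0^y|q(v)|\,dv-1\bigr)$.

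The analytic core of the proof is then to show $\int_0^y|q|\,dv=O(1/\lambda)$ uniformly on bounded sets of $x$. I would change variable back to $t$ via $dv=\sqrt{Q(\tau)}\,d\tau$, so the integral becomes $\int_0^t\bigl(|Q''_{tt}|/(4Q^2)+5Q'^2_t/(16Q^3)\bigr)\sqrt{Q}\,d\tau$. Using the explicit form of $Q$ together with the hypothesis $s/\lambda\to 0$, both ratios behave as $O(1/\lambda^2)$ and $\sqrt{Q}$ contributes a factor $O(\lambda)$, so the integrand is $O(1/\lambda)$ on a compact $t$-interval. This gives $z(y)=k(\cos y+O(1/\lambda))$. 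Unwinding $u=Q^{-1/4}z$, then $F=(\mbox{ch}\,t)^{n+m}u$, and finally substituting $x=\mbox{sh}\,t$ together with the arithmetic identity $k/\sqrt{\lambda}=(1-s^2/\lambda^2)^{1/4}$ reproduces the displayed formula, with $1/\lambda=O(1/(n+m))$.

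The main obstacle will be the uniform control of the integrand $|q|\sqrt{Q}$: although $Q\sim\lambda^2/\mbox{ch}^2t$ in the regime $s/\lambda\to 0$, one has to verify that $Q(t)$ stays bounded away from zero on the relevant compact $t$-interval and that derivatives of $Q$ do not introduce hidden factors of $s$ degrading the estimate. Once this bookkeeping is done carefully, the rest is a straightforward consolidation of the identities already displayed above, and the uniformity in $x$ over compact sets follows immediately from the uniformity of the bound on $\int_0^y|q|\,dv$ in $t$.
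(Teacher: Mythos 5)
Your proposal follows the paper's own proof essentially step for step: the same hypergeometric identification, the substitution \(x=\mbox{sh}\,t\), the gauge factor \((\mbox{ch}\,t)^{n+m}\), the Liouville transformation to \(z''+(1+q)z=0\), the Volterra equation solved by Picard iteration with the majorant \(k\,(\int_0^y|q|)^n/n!\), and the change of variable back to \(t\) to show \(\int_0^y|q|\,dv=O(1/\lambda)\) under \(s/\lambda\to0\). The argument and the bookkeeping you flag (positivity of \(Q\) on compact \(t\)-sets and the identity \(k/\sqrt{\lambda}=(1-s^2/\lambda^2)^{1/4}\)) are exactly what the paper does, so the proposal is correct and not a different route.
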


II. Consider the case of odd \(n\) and \(m\). Changing \(n\to 2n+1\), \(m\to 2m+1\), consider the polynomials
$$
P_{2n+1}^{(s)}(x)=\sum\limits_{k=0}^{n}(-1)^k\frac{(2n+1)!\,(2x)^{2n+1-2k}}{k!\,(2n+1-2k)!\,(s+k)!}\,,\quad s=m-n\ge 0\,.
$$
These polynomials are related to the hypergeometric function \(F(-n,-m;3/2;-x^2)\). Actually, we have
$$
F(-n,-m;3/2;-x^2)=\sum_{k=0}^n(-1)^k\frac{n!\,m!}{(n-k)!\,(m-k)!}\,\frac{(2x)^{2k}}{(2k+1)!}=
$$
$$
=(-1)^n\,\frac{n!\,m!}{(2n+1)!}\sum_{k=0}^n(-1)^k\frac{(2n+1)!\,(2x)^{2n-2k}}{k!\,(m-n+k)!\,(2n+1-2k)!}=(-1)^n\,\frac{n!\,m!}{(2n+1)!}\,\frac{P_{2n+1}^{(m-n)}(x)}{2x}\,.
$$
Hence
$$
P_{2n+1}^{(m-n)}(x)=(-1)^n\,\frac{(2n+1)!}{n!\,m!}\,2x\,F(-n,-m;3/2;-x^2)\,.
$$
The hypergeometric function \(F(-n,-m;3/2;z)\) satisfies the differential equation
$$
z(1-z)\,\frac{d^2F}{dz^2}+[3/2-(1-n-m)z\,]\,\frac{dF}{dz}-n\,m\,F=0\,,
$$
The substitution \(z=-x^2\) leads to the equation
$$
(1+x^2)\,\frac{d^2F}{dx^2}+\left(\frac{2}{x}+(1-2n-2m)\,x\right)\frac{dF}{dx}+4\,n\,m\,F=0\,.
$$
For the function \(f(x)=x\,F(-n,-m;3/2;-x^2)\) we obtain the equation
$$
(1+x^2)\,f''-(1+2n+2m)\,x\,f'+(2n+1)(2m+1)\,f=0\,.
$$
Making the substitution \(x=\mbox{sh}\,t\) as in the case of even indexes we obtain
$$
f''_{tt}-2(1+n+m)\,\mbox{th}\,t\,f'_t+(2n+1)(2m+1)\,f=0\,.
$$
To remove the term with \(f'\) make the substitution \(f(t)=s(t)\,u(t)\), where 
$$
s(t)=\exp\left\{(1+n+m)\int\mbox{th}\,t\,dt\right\}=\left(\mbox{ch}\,t\right)^{n+m+1}\,.
$$
Then we come to the following equation for the function \(u(t)\)
$$
u''(t)+\left[(2n+1)(2m+1)-(1+n+m)^2\,\mbox{th}^2t+\frac{1+n+m}{\mbox{ch}^2t}\right]u(t)=0
$$
or
$$
u''(t)+Q(t)\,u(t)=0\,,
$$
$$
Q(t)=\frac{(n+m+1)^2+n+m+1}{\mbox{ch}^2t}-(m-n)^2\,.
$$
As before if \(m,n\to\infty\) so that \(\dfrac{m-n}{m+n}\to0\), then \(Q(t)\to+\infty\) for any fixed \(t\). The Liouville transformation~(\ref{Liouville_transformation}) gives for the function \(z(y)\) the differential equation~(\ref{diff_equation_for_z(y)}). Considering this equation as before we obtain
$$
z(y)=z'(0)\sin y+z(0)\cos y-\int\limits_0^y q(v)\,z(v)\,\sin(y-v)\,dv\,.
$$
But the initial conditions here defer from the even indexes case. For the function \(f(x)\) we have \(f(0)=0\), \(f'(0)=1\). The same conditions we obtain and for the function \(f(t)\). For the function \(u(t)\) we obtain \(u(0)=0\), \(u'(0)=1\). Now the Liouville transformation gives \(y(0)=0\) and
$$
z(0)=\left(Q(0)\right)^{1/4}\,u(0)=0\,.
$$
From the formula
$$
u'(t)=-\frac{Q'(t)}{4Q^{5/4}(t)}\,z(y)+Q^{-1/4}(t)\,z'(y)\,y'(t)=
-\frac{Q'(t)}{4Q^{5/4}(t)}\,z(y)+Q^{1/4}(t)\,z'(y)
$$
it follows that
$$
z'(0)=Q^{-1/4}(0)=((2n+1)(2m+1)+n+m+1)^{-1/4}\equiv k\,.
$$
Thus we obtain
$$
z(y)=k\,\sin y-\int\limits_0^y q(v)\,z(v)\,\sin(y-v)\,dv\,.
$$
Application successive iterations to this integral equation gives the estimation like~(\ref{estimation_z(y)}):
$$
|z(y)-k\sin y|\le k\left(\exp\left\{\int\limits_0^y|q(y)|\,dy\right\}-1\right)\,.
$$
Denote \(\lambda=\sqrt{(n+m+1)^2+n+m+1}\). Then
$$
Q(t)=\frac{\lambda^2}{\mbox{ch}^2t}-s^2
$$
and if \(\dfrac{s}{\lambda}\to 0\) as \(n\to\infty\) then as in the previous case we obtain
$$
\int\limits_0^y|q(y)|\,dy=O\left(\frac{1}{\lambda}\right)\,,\quad n\to\infty
$$
so that
$$
z(y)=k\left(\sin y+O\left(\frac{1}{\lambda}\right)\right)\,.
$$
Because \((2n+1)(2m+1)+n+m+1=\lambda^2-s^2\) we have
$$
k=\left(\lambda^2-s^2\right)^{-1/4}=\frac{1}{\sqrt{\lambda}}\left(1-\frac{s^2}{\lambda^2}\right)^{-1/4}\,.
$$
Coming back to the function \(f(t)=s(t)\,u(t)\) we obtain
$$
f(t)=\left(\mbox{ch}\,t\right)^{n+m+1}\,Q^{-1/4}(t)\,z(y(t))=
$$
$$
=\left(\mbox{ch}\,t\right)^{n+m+1}\,\left(\frac{\lambda^2}{\mbox{ch}^2t}-s^2\right)^{-1/4}\frac{1}{\sqrt{\lambda}}\left(1-\frac{s^2}{\lambda^2}\right)^{-1/4}\left(\sin y+O\left(\frac{1}{\lambda}\right)\right)=
$$
$$
=\left(\mbox{ch}\,t\right)^{n+m+1}\,\left(\frac{1}{\mbox{ch}^2t}-\frac{s^2}{\lambda^2}\right)^{-1/4}\left(1-\frac{s^2}{\lambda^2}\right)^{-1/4}\frac{1}{\lambda}\left(\sin y+O\left(\frac{1}{\lambda}\right)\right)\,,\quad n\to\infty\,,
$$
where
$$
y=\lambda\int\limits_0^t\sqrt{\frac{1}{\mbox{ch}^2\tau}-\frac{s^2}{\lambda^2}}\,d\tau\,.
$$ 
Returning to the variable \(x=\mbox{sh}\,t\) we obtain the following result:
\begin{theorem}
\label{asymptot_Polynom_odd}
Let \(\lambda=\sqrt{(n+m+1)^2+n+m+1}\) and \(s=m-n\ge 0\). If \(\dfrac{s}{\lambda}\to0\) as \(n\to\infty\) then
$$
P_{2n+1}^{(m-n)}(x)=(-1)^n\,\frac{(2n+1)!}{n!\,m!}\,(1+x^2)^{\textstyle\frac{n+m+1}{2}}\left(\frac{1}{1+x^2}-\frac{s^2}{\lambda^2}\right)^{-1/4}\left(1-\frac{s^2}{\lambda^2}\right)^{-1/4}\cdot
$$
$$
\cdot\,\frac{2}{\lambda}\left(\sin y+O\left(\frac{1}{n+m}\right)\right)
$$
as \(n\to\infty\), where 
$$
y=\lambda\int\limits_0^{\mbox{\small arsh\,x}}\sqrt{\frac{1}{\mbox{ch}^2\tau}-\frac{s^2}{\lambda^2}}\,d\tau\,.
$$
The estimation of the reminders is uniform on any bounded set of \(x\).
\end{theorem}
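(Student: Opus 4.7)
The plan is to follow the template of Theorem~\ref{asymptot_Polynom_even} with three adjustments: the hypergeometric parameter is $3/2$ rather than $1/2$, an auxiliary function $f(x)=xF$ must be introduced to absorb the factor of $x$ in the representation, and the initial data at $t=0$ change. First I would verify by matching power series that
$$
P_{2n+1}^{(m-n)}(x)=(-1)^n\,\frac{(2n+1)!}{n!\,m!}\,2x\,F(-n,-m;3/2;-x^2),
$$
reducing the problem to the large-parameter asymptotics of $F(-n,-m;3/2;-x^2)$. From the hypergeometric equation with $z=-x^2$, and then passing to $f(x)=xF$, I obtain the ODE
$$
(1+x^2)f''-(1+2n+2m)xf'+(2n+1)(2m+1)f=0.
$$

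The substitution $x=\mathrm{sh}\,t$ followed by $f=(\mathrm{ch}\,t)^{n+m+1}u$ removes the first-derivative term and produces the normal form $u''+Q(t)u=0$ with
$$
Q(t)=\frac{(n+m+1)^2+n+m+1}{\mathrm{ch}^2 t}-(m-n)^2=\frac{\lambda^2}{\mathrm{ch}^2 t}-s^2,
$$
as claimed in the target statement. Applying the Liouville transformation~(\ref{Liouville_transformation}) reduces the equation to $z''+(1+q(y))z=0$, which I would treat as a perturbation of the harmonic oscillator.

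The crucial difference with the even case lies in the initial data: since $f(0)=0$ and $f'(0)=1$, I get $u(0)=0$, $u'(0)=1$, hence $z(0)=0$ and $z'(0)=Q^{-1/4}(0)=:k$ with $k=\lambda^{-1/2}(1-s^2/\lambda^2)^{-1/4}$. The Duhamel formula therefore reads
$$
z(y)=k\sin y-\int_0^y q(v)\,z(v)\sin(y-v)\,dv,
$$
and iterating this integral equation exactly as in the even case gives the bound $|z(y)-k\sin y|\le k\bigl(\exp\bigl\{\int_0^y|q(v)|\,dv\bigr\}-1\bigr)$. Changing variables back to $t$ via $dy=\sqrt{Q}\,dt$, the estimate $\int_0^y|q|\,dy=O(1/\lambda)$ under the hypothesis $s/\lambda\to 0$ follows from the same analysis of $Q'_t$ and $Q''_{tt}$ as in the even case, so $z(y)=k(\sin y+O(1/\lambda))$.

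Finally I would unwind the chain of substitutions $u=Q^{-1/4}z$, $f=(\mathrm{ch}\,t)^{n+m+1}u$, $2xF=2f$, and $x=\mathrm{sh}\,t$. Combining the factor $k=\lambda^{-1/2}(1-s^2/\lambda^2)^{-1/4}$ with the $\lambda^{-1/2}$ extracted from $(\lambda^2/\mathrm{ch}^2t-s^2)^{-1/4}$ produces the prefactor $2/\lambda$ and the two bracketed factors $(1/\mathrm{ch}^2 t-s^2/\lambda^2)^{-1/4}$ and $(1-s^2/\lambda^2)^{-1/4}$ in the statement, together with the phase $y=\lambda\int_0^t\sqrt{1/\mathrm{ch}^2\tau-s^2/\lambda^2}\,d\tau$. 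The main obstacle, as in the even case, is verifying that the $O(1/\lambda)$ bound on the $q$-integral is genuinely uniform on bounded $x$-sets; the rest is algebraic bookkeeping parallel to the even-index argument, with the initial-condition change $(z(0),z'(0))=(k,0)\leadsto(0,k)$ swapping $\cos y$ for $\sin y$ and introducing the extra $\lambda^{-1}$ factor in the prefactor.
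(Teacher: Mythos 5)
Your proposal follows the paper's proof essentially verbatim: the same reduction to $F(-n,-m;3/2;-x^2)$ via $f(x)=xF$, the same substitutions $x=\mathrm{sh}\,t$ and $f=(\mathrm{ch}\,t)^{n+m+1}u$, the same Liouville transformation and Volterra iteration, and the same change of initial data $(z(0),z'(0))=(0,k)$ that swaps $\cos y$ for $\sin y$ and yields the extra $1/\lambda$. The bookkeeping of the prefactor $2/\lambda$ and the two quartic-root factors matches the paper's computation exactly, so there is nothing to add.
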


\section{Eigenvalues asymptotics of two-photon quantum Rabi model}

Using theorems~(\ref{asymptot_Polynom_even}) and (\ref{asymptot_Polynom_odd}) we can find an important asimptotics and estimations of the perturbation matrices~(\ref{matrix_of_V_tilde}) and which are required in the theorem~(\ref{noncompact_perturbation_oscillator}).  
 
First note that the spectrum of the operator \(\tilde{H}_0\) (formula~(\ref{diagonal_H_0}) ) satisfies the conditions of the theorem~(\ref{noncompact_perturbation_oscillator}) for the operator \(D\). The eigenvectors \(\{e_n\}\) formed an orthonormal basis in \(H\) and the eigenvalues \(E_n^{(0)}=\omega\,(n+1/2)-1/2\) have the oscillator type. The constant shift \(-1/2\) and the factor \(\omega\) are not important for the application of the theorem~(\ref{noncompact_perturbation_oscillator}) because they can be easily reduced to the condition in theorem by constant shift and dividing on \(\omega\). Hence without loss of generality we can regard for now that \(\omega=1\) and \(E_n^{(0)}=n\).
 
Show that the operator \(\tilde{V}\) satisfies the all conditions on the operator \(R\) in the theorem~(\ref{noncompact_perturbation_oscillator}). The perturbation \(\tilde{V}\) is bounded symmetric and non-compact because 
$$
\tilde{V}^2=\frac{\Delta^2}{4}\,E\,.
$$
From the formula~(\ref{matrix_of_V_tilde}) and the theorem~(\ref{asymptot_Polynom_even}) it follows that
$$
|\tilde{V}_{2m,2n}|=\frac{\Delta}{2}\,\sqrt{\omega}\,g^{m+n}\,\sqrt{\frac{(2m)!}{(2n)!}}\,\left|P_{2n}^{(m-n)}\left(\frac{\omega}{2g}\right)\right|\le C\,\sqrt{\frac{(2n)!}{n!^2}\,\frac{(2m)!}{m!^2}}\,2^{-(n+m)}
$$
for all sufficiently large \(n\) and \(m\) if \(\dfrac{m-n}{n+m}\to 0\), where \(C\) is constant independent of \(n\) and \(m\). Here we used that 
$$
\sqrt{1+x^2}=\sqrt{1+\frac{\omega^2}{4g^2}}=\frac{1}{2g}\,.
$$
Using the formula \(C_{2n}^n=\dfrac{(2n)!}{n!^2}=\dfrac{2^{2n}}{\sqrt{\pi n}}\,(1+O(1/n))\) as \(n\to\infty\) we obtain
\begin{equation}
\label{estimation_V_even}
|\tilde{V}_{2m,2n}|\le\frac{C_1}{(nm)^{1/4}}\,.
\end{equation}
Analogously from the formula~(\ref{matrix_of_V_tilde}) and the theorem~(\ref{asymptot_Polynom_odd}) we have
$$
|\tilde{V}_{2m+1,2n+1}|=\frac{\Delta}{2}\,\sqrt{\omega}\,g^{m+n+1}\,\sqrt{\frac{(2m+1)!}{(2n+1)!}}\,\left|P_{2n+1}^{(m-n)}\left(\frac{\omega}{2g}\right)\right|\le C_2\,\sqrt{\frac{(2n+1)!}{n!^2}\,\frac{(2m+1)!}{m!^2}}\,\frac{2^{-(n+m)}}{n+m}\,.
$$
for all sufficiently large \(n\) and \(m\) if \(\dfrac{m-n}{n+m}\to 0\), where \(C_2\) is constant independent of \(n\) and \(m\). Acting as before we obtain
\begin{equation}
\label{estimation_V_odd}
|\tilde{V}_{2m+1,2n+1}|\le C_2\,\sqrt{\frac{(2n)!}{n!^2}\,\frac{(2m)!}{m!^2}}\,\frac{2^{-(n+m)}\sqrt{(2n+1)(2m+1)}}{n+m}\le\frac{C_3}{(nm)^{1/4}}\,.
\end{equation}
Combining (\ref{estimation_V_even}) and (\ref{estimation_V_odd}) we have
\begin{equation}
\label{estimation_V}
|\tilde{V}_{m,n}|\le\frac{C}{(nm)^{1/4}}
\end{equation}
for all sufficiently large \(n\) and \(m\) if \(\dfrac{m-n}{n+m}\to0\) as \(n\to\infty\). In particular, if \(m-n=p=\mbox{const}\) this formula gives at once
$$
\lim\limits_{n\to\infty}\tilde{V}_{n+p,n}=\lim\limits_{n\to\infty}\tilde{V}_{n,n+p}=0\quad \forall p\in\mathbb{Z}\,.
$$
Denoting the eigenvalues of \(\tilde{H}=\tilde{H}_0+\tilde{V}\) by \(\lambda_n\) and applying theorem~(\ref{noncompact_perturbation_oscillator}) one has 
\begin{equation}
\label{lambda_n_Rabi}
\lambda_n=n+\tilde{V}_{nn}+\sum\limits_{k\ne n}\frac{\tilde{V}_{nk}^2}{n-k}+O\left((\Delta_n+\|Ke_n\|)\,(|\tilde{V}_{nn}|+\|Ke_n\|)\right)\,,\quad n\to\infty
\end{equation}
where
$$
\|Ke_n\|^2=\sum_{k\ne n}\frac{\tilde{V}_{kn}^2}{(k-n)^2}\,,\quad\Delta_n^2=O\left(\sum\limits_{m=1}^{\infty}\frac{(|\tilde{V}_{mm}|+\|Ke_m\|)^2}{\|(\tilde{H}_0-nE-E_n/2)e_m\|^2}\right)\,.
$$
The definition of \(E_n\) is in the theorem~(\ref{eigenvalues_at_relatively_compact_R}).
Let us estimate the different terms in this expression. 

Using~(\ref{estimation_V}) we have
$$
\|Ke_n\|^2=\sum_{k\ne n\atop k\ge0}\frac{\tilde{V}_{kn}^2}{(k-n)^2}=\sum_{p\ne 0\atop p\ge -n}\frac{\tilde{V}_{n+p,n}^2}{p^2}=\sum_{0<|p|\le\sqrt{n}}\frac{\tilde{V}_{n+p,n}^2}{p^2}+\sum_{|p|>\sqrt{n}\atop p\ge -n}\frac{\tilde{V}_{n+p,n}^2}{p^2}\le
$$
$$
\le C^2\sum_{0<|p|\le\sqrt{n}}\frac{1}{p^2\sqrt{n(n+p)}}+\frac{1}{n}\sum_{|p|>\sqrt{n}\atop p\ge -n}\tilde{V}_{n+p,n}^2\le\frac{C^2}{n}\sum_{p\ne 0}\frac{1}{p^2}+\frac{1}{n}\sum_{k\ge 0}\tilde{V}_{k,n}^2=
\frac{\pi C^2}{3n}+\frac{\Delta^2}{4n}\,.
$$ 
Here we used the formula \(\tilde{V}^2=\dfrac{\Delta^2}{4}\,E\). Hence
$$
\|Ke_n\|=O\left(\frac{1}{\sqrt{n}}\right)\,.
$$
By the same way one can estimate the sum
$$
\left|\sum\limits_{k\ne n\atop k\ge 0}\frac{\tilde{V}_{nk}^2}{n-k}\right|\le\sum\limits_{k\ne n\atop k\ge 0}\frac{\tilde{V}_{nk}^2}{|n-k|}=\sum_{p\ne 0\atop p\ge -n}\frac{\tilde{V}_{n+p,n}^2}{|p|}=\sum_{0<|p|\le\textstyle\frac{n}{\ln n}}\frac{\tilde{V}_{n+p,n}^2}{|p|}+\sum_{|p|>\textstyle\frac{n}{\ln  n}\atop p\ge -n}\frac{\tilde{V}_{n+p,n}^2}{|p|}\le
$$
$$
\le C^2\sum_{0<|p|\le\textstyle\frac{n}{\ln  n}}\frac{1}{|p|\sqrt{n(n+p)}}+\frac{\ln n}{n}\sum_{|p|>\textstyle\frac{n}{\ln  n}\atop p\ge -n}\tilde{V}_{n+p,n}^2\le\frac{C^2}{\sqrt{n}}\sum_{0<|p|\le\textstyle\frac{n}{\ln  n}}\frac{1}{|p|\sqrt{n+p}}+\frac{\ln n}{n}\sum_{k\ge 0}\tilde{V}_{k,n}^2=
$$
$$
=\frac{C^2}{\sqrt{n}}\,O\left(\frac{\ln n}{\sqrt{n}}\right)+\frac{\ln n}{n}\,\frac{\Delta^2}{4}\,.
$$ 
It follows that
$$
\sum\limits_{k\ne n\atop k\ge 0}\frac{\tilde{V}_{nk}^2}{n-k}=O\left(\frac{\ln n}{n}\right)\,.
$$

Using again the theorems~(\ref{asymptot_Polynom_even}) and~(\ref{asymptot_Polynom_odd}) we find the following asymptotics for the diagonal elements \(\tilde{V}_{nn}\):
$$
\tilde{V}_{2n,2n}=\frac{\Delta}{2}\,\sqrt{\omega}\,\frac{(2n)!}{n!^2}\,2^{-2n}\,\frac{1}{\sqrt{2g}}\left(\cos\left(A\sqrt{4n^2+2n}\right)+O\left(\frac{1}{n}\right)\right)=
$$
$$
=\frac{\Delta}{2}\,\sqrt{\frac{\omega}{2\pi g n}}\,\cos\left(A\sqrt{4n^2+2n}\right)+O\left(\frac{1}{n^{3/2}}\right)\,,
$$
$$
\tilde{V}_{2n+1,2n+1}=\frac{\Delta}{2}\,\sqrt{\omega}\,\frac{(2n+1)!}{n!^2}\,2^{-2n-1}\,\frac{1}{\sqrt{2g}}\,\frac{2}{\sqrt{(2n+1)^2+2n+1}}\left(\sin\left(A\sqrt{(2n+1)^2+2n+1}\right)+\right.
$$
$$
\left.+O\left(\frac{1}{n}\right)\right)=
\frac{\Delta}{2}\,\sqrt{\frac{\omega}{\pi g(2n+1)}}\,\sin\left(A\sqrt{(2n+1)^2+2n+1}\right)+O\left(\frac{1}{n^{3/2}}\right)\,,
$$
where \(A=\arctan x=\arctan\dfrac{\omega}{2g}=\arctan\dfrac{\sqrt{1-4g^2}}{2g}\).
These formulas can be rewrite as follows
$$
\tilde{V}_{nn}=(-1)^{\textstyle[\,\frac{n}{2}\,]}\,\frac{\Delta}{2}\,\sqrt{\frac{\sqrt{1-4g^2}}{\pi g\,n}}\,\cos\left(A\sqrt{n^2+n}-\frac{\pi}{2}\,n\right)+O\left(\frac{1}{n^{3/2}}\right)\,,\quad n\to\infty
$$
or
$$
\tilde{V}_{nn}=(-1)^{\textstyle[\,\frac{n}{2}\,]}\,\frac{\Delta}{2}\,\sqrt{\frac{\sqrt{1-4g^2}}{\pi g\,n}}\,\cos\left(A\,(n+1/2)-\frac{\pi}{2}\,n\right)+O\left(\frac{1}{n^{3/2}}\right)\,,\quad n\to\infty\,,
$$
$$
A=\arctan\dfrac{\sqrt{1-4g^2}}{2g}\,.
$$

Using the results above we can estimate \(\Delta_n^2\):
$$
\Delta_n^2=O\left(\sum\limits_{m=1}^{\infty}\frac{1}{m\,\|(\tilde{H}_0-nE-E_n/2)e_m\|^2}\right)
$$
One has
$$
\sum\limits_{m=1}^{\infty}\frac{1}{m\,\|(\tilde{H}_0-nE-E_n/2)e_m\|^2}=\sum\limits_{m=1}^{n-1}\frac{1}{m\,(n-m-1/2)^2}+\frac{4}{n}+\sum\limits_{m=n+1}^\infty\frac{1}{m\,(m-n-1/2)^2}=
$$
$$
=\sum\limits_{m=1}^{n-1}\frac{1}{(n-m)\,(m-1/2)^2}+\frac{4}{n}+\sum\limits_{m=1}^\infty\frac{1}{(m+n)\,(m-1/2)^2}\le
$$
$$
\le\sum\limits_{m=1}^{n-1}\frac{1}{(n-m)\,(m-1/2)^2}+\frac{4}{n}+\frac{1}{n}\,\sum\limits_{m=1}^\infty\frac{1}{(m-1/2)^2}\,.
$$
Decompose the fraction in first sum on partial fractions one conclude that it equals also to \(O(1/n)\). Thus
$$
\Delta_n=O\left(\frac{1}{\sqrt n}\right)\,.
$$
Combining all estimates of this section, from~(\ref{lambda_n_Rabi}) we obtain
$$
\lambda_n=n+(-1)^{\textstyle[\,\frac{n}{2}\,]}\,\frac{\Delta}{2}\,\sqrt{\frac{\sqrt{1-4g^2}}{\pi g\,n}}\,\cos\left(A\,(n+1/2)-\frac{\pi}{2}\,n\right)+O\left(\frac{\ln n}{n}\right)\,,\quad n\to\infty\,.
$$
Returning to the eigenvalues \(E_n^{(0)}=\omega(n+1/2)-1/2\) of the operator \(\tilde{H}_0\) we can write
$$
\lambda_n=n\,\sqrt{1-4g^2}+\frac{\sqrt{1-4g^2}-1}{2}+(-1)^{\textstyle[\,\frac{n}{2}\,]}\,\frac{\Delta}{2}\,\sqrt{\frac{\sqrt{1-4g^2}}{\pi g\,n}}\,\cos\left(A\,(n+1/2)-\frac{\pi}{2}\,n\right)+O\left(\frac{\ln n}{n}\right)
$$
\(n\to\infty\). This is three-term asymptotic formula. It defines the eigenvalues asymptotics of the operator \(\hat{\bf H}_1\) from the section~(\ref{Rabi}), corresponding to the perturbation 
\(\hat{\bf V}_1\). For the operator \(\hat{\bf H}_2\), corresponding to the perturbation 
\(\hat{\bf V}_2=-\hat{\bf V}_1\), we will have the same asymptotic formula but with the sign "$-$" before \(\tilde{V}_{nn}\).

Denoting the eigenvalues of the operators \(\hat{\bf H}_1\) and \(\hat{\bf H}_2\) by \(E^{\pm}_n\) respectively, we obtain the following result

\begin{theorem}
\label{final_theorem_asyptot}
If \(0<g<1/2\) then the eigenvalues \(E^{\pm}_n\) of the quantum Rabi model Hamiltonian \(\hat{\bf H}\) ~{\rm(\ref{full_hamiltonian_Rabi})} have the following three-term asymptotic formula
$$
E^{\pm}_n=n\,\sqrt{1-4g^2}+\frac{\sqrt{1-4g^2}-1}{2}\pm(-1)^{\textstyle[\,\frac{n}{2}\,]}\,\frac{\Delta}{2}\,\sqrt{\frac{\sqrt{1-4g^2}}{\pi g\,n}}\,\cos\left(A\,(n+1/2)-\frac{\pi}{2}\,n\right)+O\left(\frac{\ln n}{n}\right)
$$
\(n\to\infty\), where \(A=\arctan\dfrac{\sqrt{1-4g^2}}{2g}\).
\end{theorem}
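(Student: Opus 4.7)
The plan is to apply Theorem~\ref{noncompact_perturbation_oscillator} to the transformed operator $\tilde H = U^+HU = \tilde H_0 + \tilde V$ on each of the two invariant subspaces $H'_1, H'_2$ identified in Section~\ref{Rabi}, and then translate the resulting eigenvalue asymptotics back through the rescaling by $\omega$ and the shift by $-1/2$ that relate $\tilde H_0$ to the reference spectrum $\mu_n = n$. The setup is largely in place: the factorization Theorem~\ref{factorization_U} diagonalizes $H_0$ to $\tilde H_0 = \mbox{diag}\{\omega(n+1/2)-1/2\}$, and Lemma~\ref{U_V_permutation} together with the group law of $U$ produces the explicit matrix entries~(\ref{matrix_of_V_tilde}) for $\tilde V$ in terms of the polynomials $P_n^{(s)}$ evaluated at $x = \omega/(2g)$, so that $\sqrt{1+x^2} = 1/(2g)$ and $\arctan x = A$.

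Next I would verify the hypotheses of Theorem~\ref{noncompact_perturbation_oscillator} on $\tilde V$: boundedness and symmetry follow from the properties of $V$ and the orthogonality of $U$; non-compactness comes from the isometric identity $\tilde V^2 = (\Delta/2)^2 E$; and the decay $\lim_{n\to\infty}\tilde V_{n,n+p} = 0$ for every $p\in\mathbb{Z}$ follows from the uniform bound $|\tilde V_{mn}| \le C(mn)^{-1/4}$. This bound is obtained by substituting $x = \omega/(2g)$ into Theorems~\ref{asymptot_Polynom_even} and~\ref{asymptot_Polynom_odd}, combined with Stirling's estimate $C_{2n}^n \sim 2^{2n}/\sqrt{\pi n}$, valid whenever $(m-n)/(m+n)\to 0$.

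Then I would estimate the three quantities in the conclusion of Theorem~\ref{noncompact_perturbation_oscillator}. For each of $\|Ke_n\|^2 = \sum_{k\ne n}\tilde V_{kn}^2/(k-n)^2$, the second-order sum $\sum_{k\ne n}\tilde V_{nk}^2/(n-k)$, and $\Delta_n^2$, I would split the sum into a near-diagonal part, controlled by the bound $|\tilde V_{mn}| \le C(mn)^{-1/4}$, and a tail part, bounded using the global identity $\sum_k \tilde V_{kn}^2 = (\Delta/2)^2$. This yields $\|Ke_n\| = O(n^{-1/2})$, the second-order sum $=O(\ln n/n)$, and $\Delta_n = O(n^{-1/2})$. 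Separately I would extract the precise oscillatory asymptotics of the diagonal entry $\tilde V_{nn}$ from the $s=0$ case of the two polynomial theorems, obtaining $\tilde V_{nn} = (-1)^{[n/2]}(\Delta/2)\sqrt{\sqrt{1-4g^2}/(\pi g n)}\,\cos(A(n+1/2)-\pi n/2) + O(n^{-3/2})$.

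Finally I would substitute into Theorem~\ref{noncompact_perturbation_oscillator}: the error term $(\Delta_n + \|Ke_n\|)(|\tilde V_{nn}| + \|Ke_n\|) = O(n^{-1})$ combines with the second-order sum $O(\ln n/n)$ into a single $O(\ln n/n)$ remainder, and undoing the scaling by $\omega$ and the shift by $-1/2$ restores the physical normalization. The same argument on $H'_2$, where $\hat{\bf V}_2 = -\hat{\bf V}_1$, is identical except that the oscillating term flips sign, yielding the $\pm$ in $E_n^{\pm}$. The main obstacle is the control of the second-order sum and of the near-diagonal portion of $\|Ke_n\|^2$: the polynomial bound $|\tilde V_{mn}| \le C(mn)^{-1/4}$ degrades once $(m-n)/(m+n)$ is no longer small, so to cover the tail of the sums one must invoke the global isometric identity $\tilde V^2 = (\Delta/2)^2 E$ rather than the pointwise polynomial bound.
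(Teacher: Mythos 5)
Your proposal matches the paper's proof essentially step for step: the same application of Theorem~\ref{noncompact_perturbation_oscillator} to $\tilde H_0+\tilde V$, the same bound $|\tilde V_{mn}|\le C(mn)^{-1/4}$ from the polynomial asymptotics plus Stirling, the same near-diagonal/tail splitting with the identity $\tilde V^2=(\Delta^2/4)E$ handling the tails, and the same extraction of the oscillatory diagonal term before undoing the $\omega$-scaling and shift. The approach and all the key estimates ($\|Ke_n\|=O(n^{-1/2})$, second-order sum $O(\ln n/n)$, $\Delta_n=O(n^{-1/2})$) coincide with the paper's.
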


Note that two-term asymptotic formula was obtained in~\cite{3}. From this formula it follows that the eigenvalues have oscillating behaviour as it takes place in one-photon quantum Rabi model~\cite{11,21,22,23}.

\section{Conclusion}

In the previous section we could see that all corrections to the unperturbed eigenvalues \(E_n^{(0)}\) have order \(O(1/n)\) except \(\lambda_n^{(1)}=\tilde{V}_{nn}\) and 
$$
\lambda_n^{(2)}=\sum\limits_{k\ne n\atop k\ge 0}\frac{\tilde{V}_{nk}^2}{n-k}\,.
$$
\(\lambda_n^{(1)}\) gives us the third term in asymptotics but \(\lambda_n^{(2)}=O(\ln n/n)\). Wherein we estimated \(\lambda_n^{(2)}\) by modulus without taking into account the signs of terms in sum. If we will take into account the signs the estimation of \(\lambda_n^{(2)}\) can be also equal
\(O(1/n)\). Therefore we want to end by the following hypothesis: 

{\bf Hypothesis:} In the theorem (\ref{final_theorem_asyptot}) one can change the remainder \(O(\ln n/n)\) by \(O(1/n)\).

\end{document}